\documentclass[12pt]{article}
\usepackage{amsmath}
\usepackage{amssymb}
\usepackage{amsthm}
\usepackage{graphicx}
\usepackage{authblk}
\setcounter{secnumdepth}{5}

\newcommand{\ov}{\overline}

\newcommand{\prob}{{\mathsf{Pr}}}

\newtheorem{thm}{Theorem}
\newtheorem*{thm*}{Theorem}
\newtheorem{lem}{Lemma}
\newtheorem{fact}{Fact}
\newtheorem{conj}{Conjecture}
\newtheorem{ques}{Question}
\newtheorem{defi}{Definition}
\newtheorem{eg}{Example}

\setlength{\parindent}{0pt}
\setlength{\parskip}{1.5ex}

\makeatletter
\makeatother

\title{Graph metric with no proper inclusion between lines}
\author[1]{Xiaomin Chen}
\author[2]{Guangda Huzhang}
\author[2]{\\Peihan Miao}
\author[2]{Kuan Yang}
\date{February 22, 2014}
\affil[1]{Shanghai Jianshi LTD}
\affil[2]{Shanghai Jiao Tong University}
\begin{document}

\maketitle

\begin{abstract}
In trying to generalize the classic Sylvester-Gallai theorem and De Bruijn-Erd\H{o}s theorem in
plane geometry, lines and closure lines were previously defined for metric spaces and hypergraphs.
Both definitions do not obey the geometric intuition in the sense that two lines (closure lines)
may intersect at more than one point, and one line (closure line) might be the proper subset of another.
In this work, we study the systems where one or both of the configurations are forbidden.
We note that when any two lines intersect in at most one point, the two classic theorems
extend in any metric space. We study the metric spaces induced by simple graphs where no
line is a proper subset of another, and show that the least number of lines for such a graph with $n$
vertices is between the order of $n^{4/3}$ and $n^{4/3} \ln^{2/3} n$.
\end{abstract}


The classic Sylvester-Gallai theorem~(\cite{S},~\cite{G}) states that, for any $n$ points in the
plane, either all these points are collinear, or there is a line passing through
only two of them. Another classic theorem of De Bruijn and Erd\H{o}s~\cite{DBE} states that
for any set $V$ (called {\em points}) and subsets of $V$ (called {\em lines}), if any two points
is contained in exactly one line, then either there is only one line, or the number of lines
is no less than the number of points. As mentioned in \cite{DBE}, the De Bruijn-Erd\H{o}s theorem,
when restricted to the points and lines in the plane,
can be deduced from Sylvester-Gallai theorem by an easy induction.

V. Chv\'atal first investigated the possible generalizations of these theorems in arbitrary
metric spaces and then hypergraphs. In \cite{Chvatal} {\em lines} in metric space
are defined. Roughly speaking, a line $\overline{ab}$ contains $a$, $b$,
and any points $c$ where one of the triangle inequalities over $\{a, b, c\}$ is actually an equality.
We will give the formal definition in Section \ref{sect.general}.
It was observed that with such an definition, Sylvester-Gallai theorem does not extend to arbitrary metric
spaces. Then a new type of lines, which we will call {\em closure lines} are defined and with this
definition, the Sylvester-Gallai theorem generalizes in any metric space (\cite{Chvatal}, \cite{Chen}).

On the other hand, De Bruijn-Erd\H{o}s theorem does not generalize to metric spaces with the closure lines.
It is an open question (the Chen-Chv\'atal Conjecture in \cite{CC}) whether it generalizes
if we use lines instead of closure lines.

In this work, we actually take one step back. When one first encounters the definition of lines
in hypergraphs or metric spaces, it is natural to feel something strange. Two prominent problems
one may observe are
\begin{itemize}
\item One line might be the proper subset of another line.
\item Two lines might intersect at more than one point.
\end{itemize}

We are going to study the systems where one or both of the abnormalities do not happen.
We call a system {\em geometric dominant} if no line is a proper subset of another;
call a system {\em strongly geometric dominant} if any two lines intersect in at most one point.
In Section \ref{sect.general}, we give formal definitions and some basic facts. We
observe that strongly geometric dominant systems has several properties that conform
to geometric intuitions very well; and we note that the classic
theorems of Sylvester-Gallai and of De Bruijn-Erd\H{o}s both extend to any strongly geometric dominant
metric space. In Section \ref{sect.sgd} we characterize the strongly geometric dominant graphs.
Complete graph, path, and 4-cycle are easy examples of strongly geometric dominant graphs;
and we show that there are no others.
The second part of this article is the study on geometric dominant graphs that are not strong.
While small examples are rare, we show in Section \ref{sect.gd} that geometric dominant graphs
are abundant, even for graphs with stronger restrictions, which we call {\em super geometric dominant}.
In Section \ref{sect.gd_ub} we use the super geometric dominant graphs to construct
non-trivial geometric dominant graphs with as few as $O(n^{4/3} \ln^{2/3} n)$ lines. In Section \ref{sect.gd_lb} we prove that
any non-trivial geometric dominant graph has at least $\Omega(n^{4/3})$ lines. Thus we prove the Chen-Chv\'atal conjecture (\cite{CC})
in this special case for large $n$, and give almost tight bound on the least number of lines.

\section{Definitions and general observations}\label{sect.general}

A {\em hypergraph\/} is an ordered pair $(V,H)$ such that $V$ is a finite set
and $H \subseteq 2^V$ is a family of subsets of $V$; elements of $V$ are the {\em
  vertices\/} of the hypergraph and members of $H$ are its {\em
  hyperedges;\/} a hypergraph is called {\em $k$-uniform\/} if each of
its hyperedges consists of $k$ vertices; i.e. $H \subseteq \binom{V}{k}$.

The definition of lines and closure lines were first considered by Chv\'atal in metric spaces,
and generalized to hypergraphs in \cite{CC}.
Given any 3-uniform hypergraph, for any $u$, $v \in V$, the {\em line} $\ov{uv}$ is defined as
\[
\ov{uv}\;=\;\{u,v\}\cup\{p:\{u,v,p\}\in H\}.
\]
Unless otherwise specified in this work, all the hypergraph we consider are 3-uniform.
Because we only focus on ternary relations, any bigger hyperedge can be views as the collection
of all its subsets of size $3$.

A line is called {\em universal} if it contains all the vertices.
Three distinct vertices $a$, $b$, and $c$ are called {\em collinear}
if $\{a, b, c\} \in H$.

The {\em closure line} $\widetilde{uv}$ is defined as the transitive closure of $\{u, v\}$
with respect to $H$, where $H$ is viewed as a ternary relation over $V$. i.e., we first
take the vertices in line $\ov{uv}$, then keep taking vertices in line $\ov{ab}$
whenever $a$ and $b$ are included in our line, until no new vertices can be taken.

In a metric space $(V, \rho)$,
the natural ternary relation gives a hypergraph

\[ H_\rho = \{\{a, b, c\} : \rho(a, b) + \rho(b, c) = \rho(a, c) \}. \]

Any connected weighted graph with positive weights induce a metric space where the distance of two vertices
is defined as the length of a shortest path between them. In fact any finite metric space is trivially
induced by one such graph. We also study the metric spaces induced by unweighted connected graphs, where the
shortest path is simply the least number of steps between two vertices.

While the definition of lines is more natural than that of the closure lines, it was noted that
the Sylvester-Gallai theorem no longer holds in arbitrary metric spaces with the lines thus defined,
but it holds in a sense with the closure lines:

\begin{thm}\label{thm.sylvester_chvatal} (\cite{Chen})
In any metric space, either there is a universal closure line consisting of all the points,
or else there is a closure line of size 2.
\end{thm}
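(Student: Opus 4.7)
I would argue by contradiction, assuming $(V,\rho)$ is a metric space in which every closure line has size at least $3$ and none is universal, and try to extract a structural impossibility. The natural extremal object is a closure line $S$ of minimum cardinality: by hypothesis $|S|\ge 3$, and the first useful observation is that $S$ is \emph{homogeneous}. Indeed, for any distinct $a,b\in S$, the closure line $\widetilde{ab}$ is contained in $S$ (because $S$ is closed under the line-taking operation and contains $\{a,b\}$) and has size at least $|S|$ by minimality, so $\widetilde{ab}=S$. Thus every pair inside $S$ generates all of $S$.

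Since $S$ is not universal, choose $w\in V\setminus S$ and $a\in S$ minimizing $\rho(a,w)$ over such pairs. Our assumption forces $|\widetilde{aw}|\ge 3$, and in fact $|\ov{aw}|\ge 3$ (otherwise the closure is trivially $\{a,w\}$), so some $p\notin\{a,w\}$ realizes one of the betweenness relations $[apw]$, $[awp]$, $[paw]$. In the case $[apw]$, the equality $\rho(a,p)+\rho(p,w)=\rho(a,w)$ gives $\rho(a,p)<\rho(a,w)$, and the extremal choice of $(a,w)$ forces $p\in S$. Homogeneity then yields $S=\widetilde{ap}\subseteq\widetilde{aw}$, so $\widetilde{aw}$ is a closure line strictly larger than $S$; combined with a suitable iteration (replacing $S$ by a larger ``nearly homogeneous'' closure line, or working with a $\subseteq$-maximal closure line), this should ultimately produce a universal closure line, contradicting our assumption.

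\textbf{Main obstacle.} The real difficulty lies in the two remaining cases $[awp]$ and $[paw]$, in which $p$ lies beyond $w$ or beyond $a$ and the minimality of $\rho(a,w)$ gives no immediate bound forcing $p\in S$. I expect these cases to require either a more refined extremal choice of the pair $(a,w)$ (for instance combining the minimum-distance choice with a simultaneous maximum of $\rho(b,w)$ over $b\in S$, or maximizing the diameter of $S$), or iterating the line-closure construction along $\ov{aw}$ into the ``far'' direction and exploiting the finiteness of $V$ together with homogeneity to eventually trap a second element of $S$ inside $\widetilde{aw}$. Establishing this full case analysis, and verifying that the extending step can be repeated until the closure line swallows all of $V$, is the technical heart of the proof and the step I would spend most effort on.
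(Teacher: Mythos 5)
This statement is not proved in the paper at all: it is the Sylvester--Chv\'atal theorem, quoted from \cite{Chen}, so there is no internal proof to compare your attempt against. Judged on its own terms, your proposal is not yet a proof, and the gap you flag yourself is in fact the entire difficulty of the theorem. Your opening observation is sound and worth keeping: a closure line $S$ of minimum cardinality is homogeneous, since for distinct $a,b\in S$ the set $S$ is closed under the ternary relation, hence $\widetilde{ab}\subseteq S$, and minimality forces $\widetilde{ab}=S$. The favorable case $[apw]$ is also handled correctly: there $\rho(a,p)<\rho(a,w)$ and $\rho(p,w)<\rho(a,w)$, so the extremal choice of $(a,w)$ does force $p\in S$.

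The genuine gaps are two. First, in the cases $[awp]$ and $[paw]$ the witness $p$ lies ``beyond'' $w$ or ``beyond'' $a$, the minimality of $\rho(a,w)$ gives no leverage, and nothing you have set up prevents $p$ from being an arbitrary point outside $S$; no refinement of the extremal choice that you name is shown to close these cases, and it is exactly here that Chen's argument requires a delicate, multi-page case analysis with a much more carefully engineered extremal configuration. Second, even where your argument succeeds it only produces a closure line $\widetilde{aw}$ properly containing $S$, and the proposed ``iteration'' has no basis: minimality of $S$ says nothing about lines above $S$, the larger line need not be homogeneous (homogeneity was derived \emph{from} minimality), and a $\subseteq$-maximal closure line need not be universal a priori --- that is precisely what must be proved. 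So the conclusion ``this should ultimately produce a universal closure line'' is an assertion of the theorem, not a step toward it. If you want to prove this statement rather than cite it, you should work through \cite{Chen}; the result is substantially harder than the reduction you have carried out.
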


For the De Bruijn-Erd\H{o}s theorem, the story is quite different. One easily observes that
there are arbitrary big metric spaces where no closure line includes all the points, yet the number of
closure lines is a constant. However, in terms of lines defined as above,
the following Chen-Chv\'atal Conjecture remains open.

\begin{conj}\label{conj.cc} (\cite{CC}) In any finite metric space $(V, \rho)$, either $|V|$ is a line,
or else there are at least $|V|$ distinct lines.
\end{conj}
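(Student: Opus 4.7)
The plan is to follow the template of the classical De Bruijn--Erd\H{o}s double-counting proof and to isolate precisely where it fails, so that a repair can be attempted. In the classical setting, for any point $p$ off a line $L$, the $|L|$ points of $L$ give rise to $|L|$ distinct lines through $p$, because two classical points determine a unique line; a Fisher-type inequality then forces the number of lines $m$ to be at least $n$. First I would examine what survives in a metric space: given $p \notin L$ and $q_1, q_2 \in L$, the lines $\overline{pq_1}$ and $\overline{pq_2}$ need not be distinct, so the injection from $L$ into the pencil at $p$ breaks down. However, a coincidence $\overline{pq_1} = \overline{pq_2}$ forces a rich set of betweenness relations among $p$, $q_1$, $q_2$, and I would try to show that too many such coincidences already pin $L$ down as universal.

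Next I would split into cases by the maximum line size $k$, assuming no line is universal and aiming at a contradiction with $m < n$. If $k$ is small, the $\binom{n}{2}$ pairs of points distribute over the lines with at most $\binom{k}{2}$ pairs per line, yielding at least $\binom{n}{2}/\binom{k}{2}$ distinct lines, which already exceeds $n$ once $k$ is roughly below $\sqrt{n}$. If $k$ is large, pick a point $p$ outside a largest line $L$ and try to push the pencil argument through approximately: bound the number of pairs $q_1, q_2 \in L$ that can induce the same line through $p$, by leveraging the betweenness relations forced along $L$ together with a careful choice of which points of $L$ to probe.

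The main obstacle, and the reason the full conjecture remains open, is the possible richness of degeneracies in metric betweenness: a single line $\overline{ab}$ can be indexed by many genuinely different pairs, and two distinct lines may share several points, so neither incidence counting nor a simple pigeonhole closes the argument in general. In light of this paper's results, my fallback strategy would be two-staged. First establish the conjecture under the geometric dominant hypothesis, which the authors essentially enable by ruling out proper inclusions among lines; then attempt a reduction from an arbitrary metric space to a geometric dominant one, perhaps by successively deleting or merging offending points while tracking how the line count changes. The hardest part will be this reduction: it is not clear that a combinatorial ``straightening'' of the line structure exists that preserves or decreases the line count, and if no such reduction is available, an entirely different global technique, for instance an algebraic or spectral one, will likely be required.
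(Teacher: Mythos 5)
This statement is the Chen--Chv\'atal Conjecture, which the paper explicitly records as \emph{open}; the paper contains no proof of it, only a proof of the special case of non-trivial geometric dominant graphs (where Theorem \ref{thm.main} gives $\Omega(n^{4/3})$ lines, comfortably exceeding $n$ for large $n$). Your proposal is likewise not a proof: it is a survey of strategies, and you yourself concede in the third paragraph that the key steps are not carried out. Concretely, your small-$k$ case is fine --- each pair generates a line containing it, each line of size at most $k$ contains at most $\binom{k}{2}$ pairs, so there are at least $\binom{n}{2}/\binom{k}{2}$ lines, which exceeds $n$ when $k \lesssim \sqrt{n}$ --- but the large-$k$ case is exactly where the conjecture lives, and ``bound the number of pairs $q_1,q_2 \in L$ that can induce the same line through $p$'' is a wish, not an argument. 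The coincidence $\ov{pq_1}=\ov{pq_2}$ does force betweenness relations (compare Lemma \ref{lem.abc} in the graph-metric setting, which forces $[q_1 p q_2]$), but in a general metric space nothing prevents a single pencil from collapsing badly, and no quantitative bound is supplied.

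The second-stage fallback has a more serious structural gap. The paper does \emph{not} prove the conjecture for all geometric dominant metric spaces --- it proves it only for geometric dominant \emph{graph} metrics (and even there only asymptotically, via $\Omega(n^{4/3})$); the general geometric dominant metric-space case is listed as an open question in the Discussion section. Moreover, the proposed reduction from an arbitrary metric space to a geometric dominant one by ``deleting or merging offending points'' has no candidate construction: deleting a point can destroy lines and change distances' betweenness structure unpredictably, and there is no monotonicity statement relating the line count of the modified space to that of the original. So neither stage of the fallback is available. To be clear about what the paper itself buys you: its lower-bound machinery (Lemmas \ref{lem.abc}, \ref{lem.parallel}, \ref{lem.bip_blocks}, \ref{lem.parallel_non_edge}) is tailored to graph metrics and to the no-proper-inclusion hypothesis, and does not transfer to the general conjecture.
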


Let $g(n)$ be the least number of lines in a system (hypergraph, metric space, graph, maybe with restrictions, depending on the context)
with $n$ points, under the assumption that there is no universal line.
Conjecture \ref{conj.cc} states that $g(n)$ is at least $n$ for any metric spaces.
In \cite{CC} it was showed for hypergraphs in general $g(n)$ can be as few as $\exp(O(\sqrt{\ln n}))$, but can be no less than $\log_2 n$.
Recently the lower bound was improved to $(2 - o(1))\log_2 n$ in \cite{ABCCCM}.
Special cases of Conjecture \ref{conj.cc} were proved.
For example, $g(n) \geq n$ (\cite{Chvatal2}) and in fact $g(n) \in \Theta(n^{4/3})$ (\cite{Chini_Chvatal})
for metric spaces where all the distances belong to $\{0, 1, 2\}$.
Kantor and Patk\'os (\cite{KP}) showed a linear lower bound of $g(n)$ for metric spaces
induced by points in the 2-dimensional plane with $L_1$ distance.
We refers to \cite{ABCCCM} for a more detailed survey of related results.

In this article, we denote, for distinct points $a_0$, $a_1$, ..., $a_k$,
\[ [a_0 a_1 ... a_k] := \rho(a_0, a_k) = \sum_{i=0}^{k-1} \rho(a_i, a_{i+1}) \]
With this notation, for a metric space, three distinct points $a$, $b$, and $c$ are
collinear if $[acb]$ or $[cab]$ or $[abc]$, and the line $\ov{ab}$ contains $a$, $b$, and any $c$
that is collinear with $a$ and $b$. In particular, for lines in (the metric space induced by)
graphs, such $c$ means that one of $\{a, b, c\}$ lie on a shortest path between the other two.

The following facts are obvious. We list them and will use them frequently.

\begin{fact} In any metric space $(V, \rho)$, and for distinct points $a$, $b$, $c$, $d$, $a_i$ ($i = 0, 1, ..., k$),

(a) $[abc] \Leftrightarrow [cba]$;

(b) $[abc]$ and $[acb]$ cannot both hold;

(c) $[abc]$ and $[acd]$ implies $[abcd]$;

(d) $[a_0a_1...a_k]$ implies $\rho(a_s, a_t) = \sum_{i=s}^{t-1} \rho(a_i, a_{i+1})$ for any $s < t$.
\end{fact}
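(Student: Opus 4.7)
The plan is to verify each of the four items (a)--(d) directly from the definition $[a_0 a_1 \ldots a_k] := \rho(a_0,a_k) = \sum_{i=0}^{k-1}\rho(a_i,a_{i+1})$ together with the standard metric axioms: symmetry, positivity (i.e. $\rho(x,y)=0$ iff $x=y$), and the triangle inequality. No clever construction is required; the entire exercise is just to make the word \emph{obvious} precise. I would treat (a)--(c) as warm-ups and spend the actual work on (d).

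For (a), I would invoke symmetry of $\rho$ on each term of $\rho(a,b)+\rho(b,c)=\rho(a,c)$ to rewrite it as $\rho(c,b)+\rho(b,a)=\rho(c,a)$, which is precisely $[cba]$. For (b), assume both $[abc]$ and $[acb]$ hold; adding the two defining equalities and using symmetry yields $2\rho(b,c)=0$, so positivity forces $b=c$, contradicting distinctness. For (c), I would just substitute: $[acd]$ gives $\rho(a,d)=\rho(a,c)+\rho(c,d)$, and $[abc]$ expands $\rho(a,c)$ as $\rho(a,b)+\rho(b,c)$, so $\rho(a,d)=\rho(a,b)+\rho(b,c)+\rho(c,d)$, which is the definition of $[abcd]$.

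The only item that is not essentially one line is (d), and the main (mild) obstacle is avoiding an induction on $k$ by using a cleaner squeeze. Fix $s<t$. Applying the triangle inequality three times gives
$$\rho(a_0,a_k) \;\le\; \rho(a_0,a_s)+\rho(a_s,a_t)+\rho(a_t,a_k) \;\le\; \sum_{i=0}^{s-1}\rho(a_i,a_{i+1}) + \rho(a_s,a_t) + \sum_{i=t}^{k-1}\rho(a_i,a_{i+1}),$$
where for the second step I iterate the triangle inequality on the prefix $a_0,\ldots,a_s$ and the suffix $a_t,\ldots,a_k$. By hypothesis the left side equals $\sum_{i=0}^{k-1}\rho(a_i,a_{i+1})$, which is also an upper bound on the right side after one more application of the triangle inequality to $\rho(a_s,a_t)$. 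Hence every inequality in the chain must be an equality, and in particular $\rho(a_s,a_t)=\sum_{i=s}^{t-1}\rho(a_i,a_{i+1})$, as required. The whole proof is thus a sequence of direct applications of the metric axioms, with the squeeze argument in (d) being the only step that needs a moment of thought.
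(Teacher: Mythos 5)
Your proof is correct. The paper offers no proof of this Fact at all --- it is introduced with ``The following facts are obvious'' and then used freely --- so there is no argument of the authors' to compare against; your direct substitutions for (a)--(c) and your squeeze argument for (d) (bounding $\rho(a_0,a_k)$ above by the three-part split through $a_s$ and $a_t$, then bounding that in turn by the full sum $\sum_{i=0}^{k-1}\rho(a_i,a_{i+1})$, which equals $\rho(a_0,a_k)$ by hypothesis, so every intermediate inequality collapses to equality) are exactly the standard way to make the claim precise, and they cover the boundary cases $s=0$ and $t=k$ without modification since the corresponding partial sums are simply empty.
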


For two vertices $a$ and $b$ in a graph $(V, E)$, we denote $a \sim b$
(or $a \sim_G b$ when we want to emphasize the underlying graph) if $a$ and $b$ are adjacent, otherwise $a \not\sim b$.
We simply write $ab$ to denote the distance between $a$ and $b$ in the graph.
For any vertex $a$, $N(a) = \{b \in V : a \sim b\}$ is the {\em neighborhood} of $a$;
the degree of $a$ is $\deg(a) = |N(a)|$.
And $N^*(a) = N(a) \cup \{a\}$ is the {\em closed neighborhood} if $a$.
Two vertices $a$ and $b$ are called (non-adjacent) {\em twins} if they have the same neighborhood $N(a) = N(b)$.
Note that this implicitly implies $a \not\sim b$.
We refer to the standard textbook \cite{BM} for symbols and terms that are not defined in details here.

\begin{defi} A hypergraph $(V, H)$ is {\em geometric dominant} if no line is a proper subset of another.
\end{defi}

\begin{defi} A hypergraph $(V, H)$ is {\em strongly geometric dominant} if the intersection of any two lines has at most one element.
\end{defi}

We call a metric space (strongly) geometric dominant if its induced hypergraph is (strongly) geometric dominant. Similarly,
we have a (strongly) geometric dominant graph if the metric space induced by the graph has the corresponding property.

Because each line has at least 2 points, we have

\begin{fact} If a hypergraph is strongly geometric dominant, then it is geometric dominant.
\end{fact}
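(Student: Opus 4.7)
The plan is to prove the contrapositive-style contradiction: assume the hypergraph is strongly geometric dominant but fails to be geometric dominant, and derive a contradiction from the hint the authors have already placed before the statement, namely that every line contains at least two points.

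First I would unpack the definition of a line: for any $u, v \in V$, the line $\ov{uv}$ contains $u$ and $v$ by definition, so $|\ov{uv}| \geq 2$ for every line. Next, suppose for contradiction that there exist lines $L_1$ and $L_2$ with $L_1 \subsetneq L_2$. Then $L_1 \cap L_2 = L_1$, and since $L_1$ is itself a line, $|L_1 \cap L_2| = |L_1| \geq 2$. This directly contradicts the strongly geometric dominant assumption that any two lines intersect in at most one element. Hence no line can be a proper subset of another, i.e.\ the hypergraph is geometric dominant.

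I do not foresee any obstacle here; the statement is essentially a one-line observation once the lower bound $|L| \geq 2$ for every line is recorded, and it exists mainly to justify the naming convention (``strongly'' implies ``plain''). The only subtlety worth mentioning in the write-up is that the two lines in the intersection argument must be distinct for the strongly geometric dominant condition to apply, and distinctness is immediate from $L_1 \subsetneq L_2$.
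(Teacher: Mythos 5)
Your argument is correct and is exactly the paper's reasoning: the paper justifies this fact with the single remark that every line has at least two points, which is precisely the observation you use to show a proper inclusion $L_1 \subsetneq L_2$ would force $|L_1 \cap L_2| = |L_1| \geq 2$. Your added note about distinctness of the two lines is a fine (if minor) point of care.
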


\begin{eg} Consider the metric space induce by the wheel with one center and 5 other vertices. It is geometric dominant but not strongly geometric dominant.
There are 15 different lines, each of size 4.
\end{eg}

We show that strongly geometric dominant systems has properties that conform to those of points and lines
in plane geometry.
Being strongly geometric dominant is equivalent to the condition that any line is generated by any two points inside it;
and this in turn is exactly the same as requiring that lines and closure lines coincide.

\begin{lem}\label{lem.leqcl} In a hypergraph $(V, H)$, if $\ov{uv} = \widetilde{uv}$ for any $u, v \in V$, then for any hyperedge
$\{a, b, c\} \in H$,
$\ov{ab} = \ov{ac}$.
\end{lem}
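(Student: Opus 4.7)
The plan is to observe that the hypothesis $\ov{uv}=\widetilde{uv}$ is really a closure condition, and then exploit it symmetrically on $\ov{ab}$ and $\ov{ac}$.

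First I would unpack what $\ov{uv}=\widetilde{uv}$ says. By the definition of the closure line, $\widetilde{uv}$ is the smallest set $S\supseteq\{u,v\}$ with the property that $x,y\in S$ implies $\ov{xy}\subseteq S$. Hence the assumption forces every line $\ov{uv}$ itself to have this closure property: whenever $x,y\in\ov{uv}$, one automatically has $\ov{xy}\subseteq\ov{uv}$. This is the only consequence of the hypothesis I will need.

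Next, fix a hyperedge $\{a,b,c\}\in H$. Directly from the definition of a line, $\{a,b,c\}\in H$ gives $c\in\ov{ab}$ and $b\in\ov{ac}$ (and $a\in\ov{bc}$). Now apply the closure property to $\ov{ac}$: since $a\in\ov{ac}$ trivially and $b\in\ov{ac}$ by the previous sentence, we get $\ov{ab}\subseteq\ov{ac}$. Applying the same argument to $\ov{ab}$, whose points include $a$ and $c$, we get $\ov{ac}\subseteq\ov{ab}$. Combining the two inclusions yields $\ov{ab}=\ov{ac}$, as desired.

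There is really no obstacle here; the only subtle point is to recognise that the hypothesis is merely asking each line to be a fixed point of the ``close under collinearity'' operator, after which both inclusions follow by one-step applications of this closure using the membership facts $b\in\ov{ac}$ and $c\in\ov{ab}$ supplied by the hyperedge.
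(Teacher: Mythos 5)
Your proof is correct and follows essentially the same route as the paper's: both extract from the hypothesis that each line is closed under the collinearity operator, use the hyperedge to place $b$ in $\ov{ac}$ and $c$ in $\ov{ab}$, and then obtain the two inclusions $\ov{ab}\subseteq\widetilde{ac}=\ov{ac}$ and $\ov{ac}\subseteq\widetilde{ab}=\ov{ab}$ symmetrically. No issues.
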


\begin{proof} $\{a, b, c\} \in H$, so $a$, $b \in \ov{ac}$. By the definition of closure lines, $\ov{ab} \subseteq \widetilde{ac}$. Since $\widetilde{ac} = \ov{ac}$, so $\ov{ab} \subseteq \ov{ac}$. Similarly, $\ov{ac} \subseteq \ov{ab}$.
\end{proof}

\begin{fact}\label{fact.leqcl} For any hypergraph $(V, H)$, the following are equivalent.

(a) $(V, H)$ is strongly geometric dominant.

(b) $\ov{uv} = \widetilde{uv}$ for any $u \neq v \in V$.

(c) For any line $L$ and any $u \neq v \in L$, $\ov{uv} = L$.
\end{fact}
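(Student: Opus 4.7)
The plan is to prove the three conditions equivalent via the cycle $(a)\Rightarrow(b)\Rightarrow(c)\Rightarrow(a)$, which lets each implication exploit the most convenient form of the hypothesis.

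For $(a)\Rightarrow(b)$, I would establish the nontrivial inclusion $\widetilde{uv}\subseteq\ov{uv}$ by induction on the stages of the closure-line construction. The base stage gives $\ov{uv}\subseteq\ov{uv}$ trivially. At an inductive stage, any newly added vertex $w$ enters the closure through some line $\ov{xy}$ with $x,y$ already present and hence, by hypothesis, in $\ov{uv}$. Then the distinct points $x,y$ lie in both $\ov{xy}$ and $\ov{uv}$, so condition $(a)$ forces $\ov{xy}=\ov{uv}$, and therefore $w\in\ov{uv}$.

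For $(b)\Rightarrow(c)$, I would invoke Lemma \ref{lem.leqcl}, which under $(b)$ gives $\ov{ab}=\ov{ac}=\ov{bc}$ for every hyperedge $\{a,b,c\}\in H$. Fix $L=\ov{ab}$ and distinct $u,v\in L$, and split on $|\{u,v\}\cap\{a,b\}|$. The case of full overlap is immediate. In the case of a single overlap, say $u=a$ and $v\ne b$, the hyperedge $\{a,b,v\}$ combined with the lemma yields $\ov{uv}=\ov{av}=\ov{ab}=L$. In the disjoint case, one application of the lemma to $\{a,b,v\}$ gives $\ov{av}=\ov{ab}$, so $u\in\ov{av}$; a second application to $\{a,v,u\}$ then produces $\ov{uv}=\ov{av}=L$.

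For $(c)\Rightarrow(a)$, the argument is one line: if two lines $L_1,L_2$ shared two distinct points $u,v$, then $(c)$ applied to each would give $L_1=\ov{uv}=L_2$, so they could not be two different lines.

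The main obstacle is the disjoint subcase of $(b)\Rightarrow(c)$, where one must carefully chain applications of Lemma \ref{lem.leqcl} through an intermediate line $\ov{av}$ in order to transport the description of $L$ from the generating pair $\{a,b\}$ to the arbitrary pair $\{u,v\}$ inside $L$. The other two implications amount to recognising the right inductive setup on closure lines and to a one-step uniqueness argument, respectively.
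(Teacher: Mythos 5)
Your proposal is correct and follows essentially the same route as the paper: the cyclic chain $(a)\Rightarrow(b)\Rightarrow(c)\Rightarrow(a)$, with the closure of $\ov{uv}$ under $H$ forced by the two-point intersection condition for the first implication, Lemma \ref{lem.leqcl} applied once or twice (according to the overlap of $\{u,v\}$ with the generating pair) for the second, and the one-step uniqueness observation for the third. Your version merely makes the induction on closure stages explicit where the paper states closedness directly.
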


\begin{proof} (a) $\Rightarrow$ (b): Let $a, b \in \ov{uv}$, and any $c$ such that $\{a, b, c\} \in H$. We have $c \in \ov{uv}$,
i.e., $\ov{uv}$ is already closed with respect to $H$.
Otherwise $\ov{uv}$ and $\ov{ab}$ are two different lines that intersect at both $a$ and $b$.

(b) $\Rightarrow$ (c): Consider a line $L = \ov{ab}$ and any other pair $u \neq v \in L$ such that $\{u, v\} \neq \{a, b\}$.
If $| \{u, v\} \cap \{a, b\} | = 1$, we may assume $v = a$ and use Lemma \ref{lem.leqcl},
\[ u \in \ov{ab} \Rightarrow \{a, b, u\} \in H \Rightarrow \ov{au} = \ov{ab} = L.\]
Otherwise, $\{u, v\}$ and $\{a, b\}$ are disjoint, using the same argument twice we have $\ov{ua} = \ov{ab} = L$, then
$\ov{uv} = \ov{ua} = L$.

(c) $\Rightarrow$ (a): Let $L_1$ and $L_2$ be two lines intersect at at least two distinct vertices $u$ and $v$, we have $L_1 = L_2 = \ov{uv}$.
\end{proof}

Now it is a simple fact to note that
Sylvester-Gallai theorem extends to any strongly geometric dominant metric space.

\begin{thm} In any strongly geometric dominant metric space $(V, \rho)$, either $V$ is the only line, or else there is a line of size 2.
\end{thm}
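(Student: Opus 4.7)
The plan is to reduce the statement to the closure-line version (Theorem \ref{thm.sylvester_chvatal}) by using Fact \ref{fact.leqcl}, which tells us that in a strongly geometric dominant space, lines and closure lines are the same object. So the only real work is to check two things: that a universal closure line translates into ``$V$ is the only line,'' and that a closure line of size $2$ translates into ``there is a line of size $2$.''

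First I would apply Theorem \ref{thm.sylvester_chvatal} to $(V, \rho)$ to obtain one of the two alternatives: either some closure line equals $V$, or some closure line $\widetilde{uv}$ has exactly two points. Then, since $(V, \rho)$ is strongly geometric dominant, Fact \ref{fact.leqcl}(b) gives $\ov{uv} = \widetilde{uv}$ for every pair $u \neq v$. This immediately converts the second alternative into the existence of a line $\ov{uv}$ of size $2$.

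For the first alternative, I would argue that if $\ov{uv} = V$ for some pair, then $V$ is the only line. Let $L$ be any other line, say $L = \ov{ab}$ with $a \neq b$. Then $a, b \in V = \ov{uv}$, so $L$ and $\ov{uv}$ share at least two distinct points, and strong geometric dominance forces $L = \ov{uv} = V$. Hence the universal line is unique.

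I do not expect any real obstacle: the content of the theorem has been fully absorbed into Theorem \ref{thm.sylvester_chvatal} and Fact \ref{fact.leqcl}, and the proof is essentially a one-line translation followed by the uniqueness check in the universal case.
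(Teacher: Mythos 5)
Your proposal is correct and follows the same route as the paper: reduce to Theorem \ref{thm.sylvester_chvatal} via Fact \ref{fact.leqcl}, then check uniqueness in the universal case. The only cosmetic difference is that the paper derives uniqueness from geometric dominance (any other line would be a proper subset of the line $V$), while you derive it from the intersection condition; both are immediate.
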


\begin{proof}
By Theorem \ref{thm.sylvester_chvatal} and Fact \ref{fact.leqcl}, if the space is strongly geometric dominant, either $V$ is a line or else there is a line of size 2. In the former
case, since the space is geometric dominant, there can be no other lines.
\end{proof}

We reformulate the original De Bruijn-Erd\H{o}s theorem in our setting.

\begin{thm} In any strongly geometric dominant hypergraph $(V, H)$, either $V$ is the only line, or there are at least $|V|$ lines.
\end{thm}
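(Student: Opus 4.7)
The plan is to reduce the statement to the classical De Bruijn--Erd\H{o}s theorem cited in the introduction. By Fact \ref{fact.leqcl}, strong geometric dominance is equivalent to the condition that for every line $L$ and every $u\neq v\in L$, one has $\ov{uv}=L$. Combined with the tautology $u,v\in\ov{uv}$, this says exactly that every unordered pair of distinct vertices of $V$ is contained in a unique line of the hypergraph.

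Therefore the family of all distinct lines $\{\ov{uv}:u\neq v\in V\}$ forms a linear space in the classical De Bruijn--Erd\H{o}s sense, and that theorem applies verbatim, yielding the dichotomy: either there is exactly one line in total, or there are at least $|V|$ distinct lines. In the first case the single line contains every pair of points and hence equals $V$, which matches the first alternative of the statement. In the second case the conclusion is already the second alternative.

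There is no real obstacle in this proof; the only thing to be careful about is the translation between the two phrasings of the relevant incidence condition, namely ``any two lines meet in at most one vertex'' (the definition of strong geometric dominance) and ``any two vertices lie on at most one common line'' (the hypothesis of De Bruijn--Erd\H{o}s). Fact \ref{fact.leqcl} is precisely what supplies this translation, so the theorem is an immediate corollary of it together with the classical result.
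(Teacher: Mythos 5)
Your proof is correct and follows the same route as the paper: both arguments establish that every pair of distinct vertices lies in exactly one line (the paper directly from the definition of strong geometric dominance, you via Fact \ref{fact.leqcl}(c)) and then invoke the classical De Bruijn--Erd\H{o}s theorem. The only cosmetic difference is that the paper does not bother routing through Fact \ref{fact.leqcl}, noting simply that $u,v\in\ov{uv}$ and that two distinct lines cannot both contain $u$ and $v$.
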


\begin{proof} For any two vertices $a$ and $b$, they are in $\ov{ab}$ and, because the hypergraph is strongly geometric, in no other lines.
  So any two vertices are contained in exactly one line. The original De Bruijn-Erd\H{o}s theorem applies.
\end{proof}

\section{Strongly geometric dominant graphs}\label{sect.sgd}

It is easy to see that the path $P_n$, the complete graph $K_n$, and the cycle $C_4$ are strongly geometric dominant.

\begin{lem}\label{lem.v_line} If $G = (V, E)$ is a geometric dominant graph where $V$ is a universal line, then $G$ is a path or $C_4$.
\end{lem}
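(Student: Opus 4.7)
The starting observation is that if $V$ itself is a line and $G$ is geometric dominant, then every line must equal $V$: each line is automatically a subset of $V$, and geometric dominance forbids proper containment. The hypothesis is therefore equivalent to $\ov{ab} = V$ for every pair $a,b \in V$, and my plan is to squeeze structural information by applying this equation to many different pairs. I would begin by choosing $u, v$ realizing the diameter $d = uv$; for any other vertex $w$, the equality $\ov{uv} = V$ forces one of the three triangle equalities on $\{u,v,w\}$, and the two in which $w$ lies ``outside'' of $u,v$ would produce a distance exceeding $d$, so every $w$ must satisfy $uw + wv = d$. Define the distance levels $L_i = \{w : uw = i\}$; these partition $V$, with $L_0 = \{u\}$ and $L_d = \{v\}$.

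The key rigidity step is to show $|L_i| = 1$ for every $i \ne d/2$. If $a \ne b$ both lie in $L_i$, then applying $\ov{ab} = V$ to the vertex $u$ kills two of the three triangle equalities (they would force $ab = 0$), leaving $ab = ua + ub = 2i$; symmetrically, $v \in \ov{ab}$ gives $ab = 2(d-i)$; hence $i = d/2$, and in particular $d$ must be even.

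It then remains to case-analyze. When $d$ is odd, or when $d$ is even and $|L_{d/2}| = 1$, every level is a singleton $\{v_i\}$; shortest paths from $u$ yield $v_i \sim v_{i+1}$, and the bound $v_i v_j \ge |i-j|$ rules out other edges, giving $G = P_{d+1}$. When $d = 2$ and $|L_1| \ge 2$, a direct check forces $|L_1| = 2$ (a third vertex $c \in L_1$ would break $\ov{ab} = V$ since $ca + cb = ab = 2$ would force $c \sim a$, contradicting $a,c \in L_1$ being non-adjacent), and the four vertices assemble into $C_4$. The case $d \ge 4$ even with $|L_{d/2}| \ge 2$ I rule out as follows: using the singleton structure of the off-center levels, the chain $v_1 \sim v_2 \sim \dots \sim v_{d/2 - 1} \sim a$ together with the triangle lower bound $v_1 a \ge ua - u v_1$ pins $v_1 a = v_1 b = d/2 - 1$, so $v_1 a + v_1 b = d - 2 < d = ab$ while neither distance reaches $d$; this contradicts $v_1 \in \ov{ab}$.

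The main obstacle is the $d \ge 4$ even case: one must first install the singleton rigidity on the off-center levels, and only then can one compute distances from $v_1$ to the middle level precisely enough to drive a contradiction with $\ov{ab} = V$. The $d = 2$ case is genuinely separate, since a third element of $L_1$ is not excluded by the $u,v$ test alone, and this is where $C_4$ sneaks in as an honest second possibility.
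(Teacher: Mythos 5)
Your proof is correct, but it takes a genuinely different route from the paper's. The paper argues locally: since $V$ is the only line, $G$ has no triangle (in a triangle $abc$ one has $a\notin\ov{bc}$) and no vertex of degree $3$ (three pairwise non-adjacent neighbours $b,c,d$ of a vertex have pairwise distance $2$, so $b\notin\ov{cd}$); a connected graph with maximum degree $2$ is a path or a cycle, and among cycles only $C_4$ survives. You instead argue globally, via BFS layers from a diametral pair $u,v$: every vertex is metrically between $u$ and $v$, every layer other than the middle one is a singleton, and the remaining case analysis ($d$ odd, $d=2$, $d\ge 4$ even) recovers $P_{d+1}$ and $C_4$ and kills everything else. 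Both arguments are complete; in particular your handling of the delicate points checks out --- the two collinearity options $[uab]$ and $[abu]$ really do force $ab=0$, the singleton off-centre layers give $v_1a=v_1b=d/2-1$ so that $v_1\notin\ov{ab}$ when $d\ge4$, and the $d=2$ case correctly isolates $C_4$. The paper's proof is considerably shorter and needs no case split on the diameter, since it outsources the structure to the standard classification of connected graphs of maximum degree $2$; your layered approach is longer but more self-contained, derives the path structure explicitly rather than quoting that classification, and makes transparent exactly where $C_4$ enters (as the unique even-diameter configuration with a fat middle layer).
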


\begin{proof} Since $V$ is a line and $G$ is geometric dominant, we have
\begin{equation}\label{eq.v_line}
V \text{ is the only line in } G.
\end{equation}
So there cannot be any triangles in $G$, otherwise any triangle $abc$ will have $a \not\in \ov{bc}$ which  contradicts (\ref{eq.v_line}).
We further prove that there are no vertices of degree $3$ or more. Suppose $a$ has neighbors $b$, $c$, and $d$. Because there are no triangles,
$b$, $c$, and $d$ are pairwise non-adjacent, so $bc=cd=db = 2$, and this implies $b \not\in \ov{cd}$ which contradicts (\ref{eq.v_line}) as well.

So the graph, being connected, is a path or a cycle. It is easy to check $C_4$ is the only cycle satisfying (\ref{eq.v_line}).
\end{proof}

\begin{defi} A graph is called {\em non-trivial} if it is not a complete graph, nor a path, nor $C_4$.
\end{defi}

\begin{lem}\label{lem.bridge} In a non-trivial geometric dominant graph, if three distinct vertices $a$, $b$, and $c$ satisfy $a \sim b$, $b \sim c$ and $a \not\sim c$,
then there exists another vertex $d$ such that $d$ is adjacent to all
of $a$, $b$, and $c$.
\end{lem}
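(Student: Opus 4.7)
The plan is to argue by contradiction: assume no vertex $d$ is adjacent to all of $a, b, c$, and derive a contradiction from non-triviality and geometric dominance. First note $ab = bc = 1$ and $ac = 2$, so $[abc]$ holds and each of $\overline{ab}$, $\overline{bc}$, $\overline{ac}$ contains $\{a, b, c\}$. The hinge of the argument is that any $d \in \overline{ac} \setminus \overline{ab}$ is automatically a common neighbor of $a, b, c$: such $d$ differs from $a, b, c$; since $ab = 1$, the condition $d \notin \overline{ab}$ forces $|da - db| \ne 1$ and hence $da = db$ by the triangle inequality; and $d \in \overline{ac} \setminus \{a, c\}$ gives $da + dc = 2$ or $|da - dc| = 2$, the latter being excluded by a short triangle-inequality check using $da = db$ and $bc = 1$. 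This leaves $da = dc = 1$, whence $db = 1$. So the contradictory hypothesis forces $\overline{ac} \subseteq \overline{ab}$, which by geometric dominance upgrades to $\overline{ab} = \overline{ac}$.

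Next I would show this equality propagates: $\overline{ab} = \overline{ac}$ implies $\overline{ab} = \overline{bc}$. Writing $M := \overline{ab} = \overline{ac}$ and combining $|xa - xb| = 1$ with ($xa + xc = 2$ or $|xa - xc| = 2$) and the triangle inequality $|xb - xc| \leq bc = 1$, the vertices of $M \setminus \{a, b, c\}$ fall into three types: (A) $(xa, xb, xc) = (1, 2, 1)$; (B$_1$) $(k + 2, k + 1, k)$ for some $k \geq 1$; (B$_2$) $(k, k + 1, k + 2)$ for some $k \geq 1$. In every case $|xb - xc| = 1$, so $x \in \overline{bc}$; hence $M \subseteq \overline{bc}$ and by dominance $M = \overline{bc}$. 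Thus $\overline{ab} = \overline{bc} = \overline{ac}$; call this common set $L$.

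Finally I would rule out this configuration by showing $L = V$, after which Lemma \ref{lem.v_line} forces $G$ to be a path or $C_4$, contradicting non-triviality. The same classification shows the only elements of $L$ adjacent to $b$ are $a$ and $c$. Any $x \notin L$ must fail each of the three line-defining conditions (since all three coincide with $L$), which compels $xa = xb = xc \geq 2$, so $x$ is not adjacent to $b$ either. Hence $N(b) = \{a, c\}$. If some $v \notin L$ existed then $va = vb = vc = r \geq 2$, yet the penultimate vertex on a shortest $v$-to-$b$ path would lie in $N(b) = \{a, c\}$ at distance $r - 1$ from $v$, contradicting $va = vc = r$. So $L = V$, completing the contradiction.

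The main obstacle is the classification of $M \setminus \{a, b, c\}$ into the three types via triangle inequalities; once that is in hand, the rest of the argument consists of short direct deductions.
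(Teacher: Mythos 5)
Your proof is correct; every step checks out. The key observation is shared with the paper's proof: a vertex $x$ witnessing $\ov{ac}\not\subseteq\ov{ab}$ must satisfy $[axc]$ (the cases $[xac]$ and $[acx]$ would put $x$ back into $\ov{ab}$ via $[abc]$), hence $xa=xc=1$, and then $x\sim b$ as well. But you force the existence of such a witness differently. The paper first proves that $b$ has a neighbor $z\notin\{a,c\}$ (by a collapse argument: otherwise $\ov{ab}=\ov{bc}=\ov{ac}=V$ and Lemma \ref{lem.v_line} applies), then notes that if $z$ is not already the desired $d$, it lies in $\ov{ab}\setminus\ov{ac}$, so geometric dominance immediately yields a vertex of $\ov{ac}\setminus\ov{ab}$. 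You instead assume no common neighbor exists, conclude $\ov{ac}\subseteq\ov{ab}$ and hence $\ov{ab}=\ov{ac}$ by dominance, and then run a longer collapse: the classification of distance triples $(xa,xb,xc)$ into $(1,2,1)$, $(k+2,k+1,k)$, $(k,k+1,k+2)$ shows $\ov{ab}=\ov{bc}$ too, that $N(b)=\{a,c\}$, and finally that the common line is all of $V$, contradicting non-triviality via Lemma \ref{lem.v_line}. What the paper's route buys is brevity after the preliminary step --- dominance hands over the witness at once; what your route buys is the elimination of the case split on whether $z\sim a$ or $z\sim c$ and of the separate ``third neighbour of $b$'' claim, at the cost of the classification lemma and the propagation to $\ov{bc}$. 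Your triple classification is essentially the parity observation the paper only records later as Lemma \ref{lem.parity}, so the two proofs draw on the same toolkit in a different order.
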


\begin{proof}
We first prove that $b$ has neighbors other than $a$ and $c$.
Otherwise, for any other vertex $z \in \ov{ac}$, we have (1) $[zabc]$ or (2) $[zcba]$ or (3) $az = zc = 1$, $bz = 2$. In any case, $z \in \ov{ab}$ and $z \in \ov{bc}$. So $\ov{ac} \subseteq \ov{ab}$ and $\ov{ac} \subseteq \ov{bc}$. Since the graph is geometric dominant,
$\ov{ab} = \ov{ac} = \ov{bc}$. Now consider any vertex $x$ and a shortest path between $x$ and $b$. Since $a$ and $c$ are $b$'s only neighbors,
the path goes through one of $a$ and $c$, so $x \in \ov{ab}$ or $x \in \ov{bc}$. But since $\ov{ab} = \ov{bc} = \ov{ac}$, so $x \in \ov{ac}$. This implies
that $\ov{ac} = V$ and $G$ is trivial by Lemma \ref{lem.v_line}.

Now, pick any vertex $z \in N(b) \setminus \{a, c\}$, if $z \sim a$ and $z \sim c$, then we set $d = z$ and done. Without loss of generality, $z \not\sim a$.
So $z \in \ov{ab}$ and $z \not\in \ov{ac}$ (since $az = ac = 2$ and $zc \in \{1, 2\}$). Because the graph is geometric dominant, we have to find a vertex which lies on
$\ov{ac}$ but not $\ov{ab}$.

Consider lines $\ov{ac}$ and $\ov{ab}$, any vertex $x$ on $\ov{ac}$ satisfying $[xac]$ will have $[xabc]$ therefore  $x \in \ov{ab}$.
Similarly, $[acx]$ implies $x \in \ov{ab}$. So there must be a vertex $x$ such that $[axc]$ and $x \not \in \ov{ab}$. So $x \sim a$, $x \sim c$,
and $x \sim b$ (otherwise $x \in \ov{ab}$). Now let $d = x$ and we are done.
\end{proof}

\begin{thm} A connected graph is strongly geometric dominant if and only if it is a path or a complete graph or $C_4$.
\end{thm}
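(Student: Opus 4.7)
The plan is to prove the nontrivial (``only if'') direction by contradiction, using Lemma \ref{lem.bridge} as the main tool. Assume $G$ is a connected, strongly geometric dominant graph that is non-trivial (neither $K_n$, $P_n$, nor $C_4$). Since $G$ is connected and not complete, there exists a pair of vertices at distance $2$, and taking any common neighbor of such a pair yields three distinct vertices $a, b, c$ with $a \sim b$, $b \sim c$, and $a \not\sim c$.

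Strong geometric dominance implies geometric dominance, so I can apply Lemma \ref{lem.bridge} to obtain a fourth vertex $d$ with $d \sim a$, $d \sim b$, $d \sim c$. The induced configuration then has $ab = bc = ad = bd = cd = 1$ and $ac = 2$, i.e., a copy of $K_4$ minus the edge $ac$.

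The heart of the argument is to compare the two lines $\ov{ab}$ and $\ov{ac}$. Both contain $\{a,b,c\}$: since $[abc]$ holds (because $ab + bc = 2 = ac$), $c \in \ov{ab}$ and $b \in \ov{ac}$. Next, $d \in \ov{ac}$ because $[adc]$ holds ($ad + dc = 2 = ac$). On the other hand, $d \notin \ov{ab}$: the three pairwise distances among $\{a,b,d\}$ are all $1$, so none of the triangle equalities $[dab]$, $[adb]$, $[abd]$ can hold. Hence $\ov{ab}$ and $\ov{ac}$ are distinct lines sharing at least the three points $a, b, c$, contradicting strong geometric dominance. The ``if'' direction is handled by the remark immediately preceding the theorem.

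I do not expect a real obstacle — the heavy lifting was done in Lemma \ref{lem.bridge}, which already produced the critical fourth vertex $d$. The only care needed is the small case analysis verifying $d \notin \ov{ab}$, which is trivial since the triple $\{a,b,d\}$ forms a triangle of equal distances in which no strict betweenness can occur.
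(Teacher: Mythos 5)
Your proposal is correct and follows essentially the same route as the paper: obtain $a\sim b\sim c$ with $a\not\sim c$ from non-completeness, invoke Lemma \ref{lem.bridge} to get the common neighbor $d$, and observe that $\ov{ab}$ and $\ov{ac}$ are distinct lines (via $d\in\ov{ac}\setminus\ov{ab}$) meeting in the three points $a,b,c$. Your write-up merely spells out the distance checks that the paper leaves implicit.
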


\begin{proof}
The ``if'' part is easy to check. We prove the other direction. If the graph is not trivial (in particular not complete),
there are three distinct vertices $a \sim b \sim c$ and $a \not\sim c$, and by Lemma \ref{lem.bridge},
another vertex $d$ adjacent to all of $\{a, b, c\}$.
Now $\ov{ab}$ and $\ov{ac}$ are two different lines ($d \in \ov{ac} - \ov{ab}$)
that intersect in more than one vertices ($\{a, b, c\} \subseteq \ov{ab} \cap \ov{ac}$),
contradicts the assumption that the graph is strongly geometric dominant.
\end{proof}

\section{Which graphs are geometric dominant?}\label{sect.gd}

The smallest non-trivial example is the wheel with 6 vertices. Then on 7 vertices, there is one (up to isomorphism) example,
which is the wheel plus a twin of a non-center vertex. By adding one vertex (also a twin of a non-center vertex) to the graph of order 7,
we have one example of geometric dominant graph of order 8. We do not know whether there are such graph of order 9.

On the other hand, while small non-trivial geometric dominant graphs are hard to find, when the number of vertices is big, the geometric dominant graphs are abundant.

\begin{fact} For each $n \geq 16$, there is a non-trivial geometric dominant graph of order $n$.
\end{fact}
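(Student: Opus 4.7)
The plan is to construct, for each $n \geq 16$, a non-trivial geometric dominant graph as an \emph{inflated wheel}. Take a center vertex $v_0$ and replace each rim vertex of $W_5$ by a twin-class $V_i$ of size $k_i \geq 3$, where $\sum_{i=1}^{5} k_i = n - 1$; keep $v_0$ adjacent to every rim vertex, make each $V_i$ completely adjacent to $V_{i+1}$ and $V_{i-1}$ (indices modulo $5$), and place no edges inside any $V_i$. Setting $k_i \in \{\lfloor (n-1)/5 \rfloor, \lceil (n-1)/5 \rceil\}$ gives $k_i \geq 3$ for every $n \geq 16$, and the resulting graph is clearly non-trivial (it is not $K_n$, not a path, and not $C_4$).

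First I would check that every pairwise distance in this graph is $1$ or $2$, with $1$ precisely for adjacent pairs. Then, by a short case analysis on the endpoints, I would enumerate exactly four types of lines: the center lines $\ov{v_0 x} = \{v_0\} \cup V_i \cup V_{i-2} \cup V_{i+2}$ for $x \in V_i$; the short-rim lines $\ov{uv} = V_{i-1} \cup V_i \cup V_{i+1} \cup V_{i+2}$ for $u \in V_i,\, v \in V_{i+1}$; the long-rim lines $\ov{uv} = \{v_0, u, v\} \cup V_{i+1}$ for $u \in V_i,\, v \in V_{i+2}$; and the twin lines $\ov{uu'} = \{u, u', v_0\} \cup V_{i-1} \cup V_{i+1}$ for distinct twins $u, u' \in V_i$. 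Each line thus has a ``signature'' consisting of whether it contains $v_0$ and which full twin-classes it contains, together with at most two specific endpoint vertices.

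The core step is verifying that no line is a proper subset of another, by inspecting all sixteen ordered type-pairs. The governing observation is that any inclusion $L \subsetneq L'$ forces every full twin-class appearing in $L'$ either to match a full twin-class of $L$ (which reduces to comparing cyclic positions in $\mathbb{Z}/5$) or to be absorbed by the at most two loose endpoint vertices of $L$; the latter is impossible since each $k_i \geq 3$. Combined with the observations that exactly the center, long-rim, and twin lines contain $v_0$, and that the twin-class signatures of distinct types differ structurally, this reduces each case to a quick check.

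The main obstacle will be making the signature-matching bookkeeping exhaustive, especially for pairs involving the smallest lines (long-rim and twin lines), where two lines can share all their full twin-classes and differ only in their endpoints; one must carefully check that the endpoints of the candidate smaller line are not accidentally contained in the larger one. The requirement $k_i \geq 3$ is precisely what pins down $n \geq 1 + 5 \cdot 3 = 16$ as the threshold of the construction.
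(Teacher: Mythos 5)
Your construction is exactly the paper's: the paper's proof also starts from the $5$-wheel and substitutes a stable set of size at least $3$ for each non-center vertex, leaving the verification as ``easy to check.'' Your line enumeration and the signature/absorption argument correctly fill in that verification (and correctly pinpoint why size $\geq 3$, hence $n \geq 16$, is needed), so the proposal is correct and essentially identical in approach.
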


\begin{proof} Start from the 5-wheel. We substitute a stable set of size at least 3 for each non-center vertex. It is easy to check that the resulting graph is geometric dominant.
\end{proof}

\begin{defi}\label{defi.general_graph} A connected graph $G = (V, E)$ is called {\em super geometric dominant} if it satisfies the following
\begin{enumerate}
\item The diameter of $G$ is 2.
\item For any $a$, $b$, $c$, $d \in V$, where $a \neq b$, $c \neq d$, and $\{a, b\} \neq \{c, d\}$, we have $\ov{ab} \not\subseteq \ov{cd}$.
\item For any $a \neq b \in V$, we have $N^*(a) \not\subseteq N^*(b)$ where $N^*(x)$ is
the neighbors of $x$ plus $x$ itself.
\item For any $a$, $b$, $c \in V$, where $b \neq c$, we have $N^*(a) \not\subseteq \ov{bc}$ and $\ov{bc} \not\subseteq N^*(a)$.
\end{enumerate}
\end{defi}

In words, besides having diameter 2, we take the family of all the $\binom{n}{2}$ lines and $n$ closed neighborhoods, we require that all these sets to form an antichain. Clearly such graphs are geometric dominant. The extra requirements will be helpful when we construct geometric dominant graphs having few lines. The next theorem states that the random graph is almost surely not only geometric dominant, but also super.

\begin{thm}\label{thm.super} When $n \rightarrow \infty$ and $p$ is a function such that
  \[p \in \omega\left(\sqrt[3]{\frac{\ln n}{n}} \right) \;\; \text{ and } \;\; 1 - p \in \omega\left(\sqrt{\frac{\ln n}{n}} \right), \]
the random graph $\mathcal{G}_{n, p}$ is super geometric dominant (therefore geometric dominant) almost surely.
\end{thm}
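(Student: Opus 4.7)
The plan is to verify each of the four conditions of Definition~\ref{defi.general_graph} individually and conclude by a union bound. The whole analysis rests on one structural observation about lines in a diameter-$2$ graph: for distinct $b,c$,
\[ \ov{bc} \;=\; \begin{cases} \{b,c\}\cup(N(b)\triangle N(c)) & \text{if } b\sim c, \\ \{b,c\}\cup(N(b)\cap N(c)) & \text{if } b\not\sim c, \end{cases} \]
which holds because any $d\notin\{b,c\}$ is collinear with $\{b,c\}$ iff the distances $bd,dc$ sum to $bc\in\{1,2\}$. Consequently, in $\mathcal{G}_{n,p}$, the membership of a non-labelled vertex $v$ in $\ov{bc}$ is determined by just the two edges $vb,vc$, and these membership decisions are mutually independent across different $v$.

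Condition~(1) is handled directly: for any pair $u\neq v$ the probability they are non-adjacent and share no common neighbour is $(1-p)(1-p^2)^{n-2}\leq e^{-p^2(n-2)}$, and a union bound over the $\binom{n}{2}$ pairs is $o(1)$ as soon as $np^2\gg\ln n$; the hypothesis $np^3\in\omega(\ln n)$ gives $np^2\geq np^3/p\gg\ln n$.

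For each of the remaining conditions~(2)--(4), I fix the $O(1)$ labels (at most four), condition on the edges among them, and show that among the other $n-O(1)$ vertices there exists with high probability a \emph{witness} $v$ lying in the first set but not in the second. Using the structural description (and the trivial one for $N^*(a)$), each witness probability $q$ factors, by the independence observation, as a product of a bounded number of Bernoulli-$p$ or Bernoulli-$(1-p)$ events on edges incident to $v$. Hence the probability of no witness is at most $(1-q)^{n-O(1)}\leq e^{-qn+O(1)}$, and a union bound over the $O(n^4)$ labelled tuples is $o(1)$ as soon as $qn\gg\ln n$. Case enumeration---by whether each labelled pair is an edge, whether the two pairs of~(2) share a vertex, whether $a\in\{b,c\}$ in~(4), and which direction of the non-inclusion is being witnessed---produces the worst-case values
\[ q\in\{p(1-p),\;p^2(1-p),\;p(1-p)^2,\;p^2(1-p^2),\;p^3\}. \]
Splitting into $p\leq 1/2$ (where every $(1-p)$ factor is $\geq 1/2$, so it suffices to use $np^3\gg\ln n$ to handle the residual $p$-factors) and $p\geq 1/2$ (where every $p$ factor is $\geq 1/2$, so it suffices to use $n(1-p)^2\gg\ln n$), the two hypotheses together imply $nq\gg\ln n$ in every case.

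The main obstacle is not any single estimate but the bookkeeping of the many subcases, and in particular the identification of the correct extremal subcases that certify tightness. The hypothesis $p\in\omega((\ln n/n)^{1/3})$ is pinpointed by the subcase of~(2) in which $\{a,b\}$ and $\{c,d\}$ share exactly one vertex, say $b=d$, with $a\not\sim b$ and $c\sim b$: then a non-label witness $v$ must simultaneously be adjacent to $a$, $b$, and $c$, giving $q=p^3$. The hypothesis $1-p\in\omega(\sqrt{\ln n/n})$ is pinpointed by the disjoint-pair subcase of~(2) with $a\sim b$ and $c\not\sim d$, for which $q=2p(1-p)^2(1+p)\sim 4(1-p)^2$ as $p\to 1$.
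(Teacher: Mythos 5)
Your plan is correct and is essentially the paper's own proof: the characterization of $\ov{bc}$ as $\{b,c\}\cup(N(b)\triangle N(c))$ or $\{b,c\}\cup(N(b)\cap N(c))$ in a diameter-$2$ graph is the paper's ``tight set'' observation, and the paper likewise verifies each condition of Definition~\ref{defi.general_graph} by exhibiting a witness vertex whose existence fails with probability $(1-q)^{n-O(1)}$ and union-bounding over $O(n^4)$ labelled tuples. Your identification of the extremal subcases (the $q=p^3$ case forcing $p\in\omega((\ln n/n)^{1/3})$ and the $q\asymp(1-p)^2$ case forcing $1-p\in\omega(\sqrt{\ln n/n})$) matches the paper's events $L_{abc}$ (with $a\not\sim b$, $a\sim c$) and $L_{abcd}$ (with $a\sim b$, $c\not\sim d$).
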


We put the proof of Theorem \ref{thm.super} in the appendix. It is quite similar to the proof of Theorem \ref{thm.super_super}, which
is more important in the next section.
By taking $1 - p = C \sqrt{\ln n / n}$ in Theorem \ref{thm.super}, we see the existence of super geometric dominant graphs that missed
only $O(n^{3/2} \sqrt{\ln n})$ edges. By using a less symmetric construction, we show that there are graphs missing even much
fewer edges.

\begin{thm}\label{thm.super_super} There is a family of graphs $\{G_n\}$, a constant $C > 0$, and a constant $N > 0$, such that, for $n > N$, $G_n$ is a super geometric dominant graph on
$n$ vertices such that the complement of $G_n$ has less than $C n \ln n$ edges.
\end{thm}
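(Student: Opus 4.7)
The plan is to realize $G_n$ as the complement of a sparse graph $\bar G_n$ on $n$ vertices with $|E(\bar G_n)|\leq Cn\ln n$. Writing $N'(v)=N_{\bar G_n}(v)$, every line and closed neighborhood of $G_n$ is determined by $\bar G_n$: $N^*(v)=V\setminus N'(v)$, and for distinct $a,b$,
\[
\overline{ab}=\begin{cases}\{a,b\}\cup(N'(a)\triangle N'(b)), & a\sim_{G_n} b,\\ \{a,b\}\cup(V\setminus(N'(a)\cup N'(b))), & a\not\sim_{G_n} b.\end{cases}
\]
If every $\bar G_n$-degree is $d=\Theta(\ln n)$, then lines over $G_n$-adjacent pairs have size $\Theta(d)$ while lines over non-adjacent pairs and all closed neighborhoods have size $n-\Theta(d)$. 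The four super geometric dominant conditions thus translate into antichain conditions on the family generated by $\{N'(v)\}$.

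Three of the four conditions, together with the ``large versus large'' case of condition 2, are handled by routine probabilistic arguments analogous to the appendix proof of Theorem~\ref{thm.super}. Condition 1 (diameter two) is automatic because $|N'(a)\cup N'(b)|\leq 2d<n$ for every $\bar G_n$-edge $\{a,b\}$. Condition 3 (antichain of closed neighborhoods) follows once $\bar G_n$ is near-regular --- equal-sized neighborhoods force any containment to be equality, which a second-moment computation excludes. Condition 4 and the large-vs-large case $\overline{ab}\subseteq\overline{cd}$ with $a\not\sim b$ and $c\not\sim d$ involve sets of size $n-\Theta(d)$ whose complements are of size $\Theta(d)$, so a Chernoff-type union bound over the $O(n^4)$ quadruples makes the per-instance failure probability $n^{-\Omega(C)}$ and the total negligible.

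The real obstacle, and the reason a ``less symmetric'' construction is required, is the small-versus-large case of condition 2: for every $G_n$-edge $\{a,b\}$ and every $G_n$-non-edge $\{c,d\}$ we must show $\overline{ab}\not\subseteq\overline{cd}$, i.e., the $\Theta(d)$-sized set $\{a,b\}\cup(N'(a)\triangle N'(b))$ meets the $\Theta(d)$-sized set $(N'(c)\cup N'(d))\setminus\{c,d\}$. In a uniform $\mathcal G_{n,d/n}$ these two sets behave like independent random subsets of $V$ with expected intersection only $\Theta(d^2/n)=o(1)$, so for a typical quadruple the bad containment occurs with probability tending to $1$ and no union bound can rescue the uniform model. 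The construction therefore plants additional structure into $\bar G_n$: one fixes a sparse auxiliary graph (a $\Theta(d)$-regular expander on $V$, or the incidence graph of a suitable combinatorial design) and takes $\bar G_n$ to be a random $\Theta(d)$-regular graph correlated with this skeleton so that, for every quadruple $(a,b,c,d)$, either one of $a,b$ is a $\bar G_n$-neighbor of $c$ or $d$ (providing a witness vertex in $\overline{ab}\cap(V\setminus\overline{cd})$ directly) or the symmetric difference $N'(a)\triangle N'(b)$ provably hits $N'(c)\cup N'(d)$ via an expander-mixing argument. Once this structural lemma is in place the $O(n^4)$ union bound for condition 2 closes, and together with Chernoff concentration on $|E(\bar G_n)|\leq Cn\ln n$ the theorem follows for all sufficiently large $n$.
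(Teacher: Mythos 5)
Your reduction of the four conditions to antichain conditions on the complement neighborhoods $N'(v)$ is correct, and you have correctly isolated the dangerous case: a small line $\ov{ab}=\{a,b\}\cup(N'(a)\triangle N'(b))$ must escape a large line $\ov{cd}=\{c,d\}\cup(V\setminus(N'(c)\cup N'(d)))$, i.e.\ two sets whose sizes are comparable to the complement degrees must intersect. But the fix you propose cannot work, because your target structure --- a complement that is near-regular of degree $d=\Theta(\ln n)$ --- is \emph{provably never} super geometric dominant, no matter how cleverly it is correlated with an expander or a design. Fix one complement-edge $\{c,d\}$ and let $W=(N'(c)\cup N'(d))\setminus\{c,d\}$, so $|W|\le 2d$. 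A pair $\{a,b\}$ with $a\sim_{G_n}b$ satisfies $\ov{ab}\not\subseteq\ov{cd}$ only if $\{a,b\}$ meets $W$ (at most $2dn$ pairs) or some $w\in W$ lies in $N'(a)\triangle N'(b)$, which requires $w$ to be a complement-neighbour of exactly one of $a,b$; the number of such pairs is at most $\sum_{w\in W}\deg_{\ov{G_n}}(w)\cdot n = O(d^2 n)=O(n\ln^2 n)$. This covers only a vanishing fraction of the $\binom{n}{2}-O(n\ln n)$ pairs that must be covered, so condition 2 fails for almost every quadruple in \emph{any} such graph, random or planted. The same count kills condition 4 in the direction $\ov{bc}\not\subseteq N^{*}(a)$ with $b\sim_{G_n}c$, so your claim that it is handled by a Chernoff-type union bound is also wrong. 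Expander mixing only makes matters worse: it certifies that the intersection of two $\Theta(\ln n)$-sized sets is close to its expected size $\Theta(\ln^2 n/n)=o(1)$, i.e.\ empty.

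The count above shows that for every complement-edge $\{c,d\}$ one needs $\sum_{w\in N'(c)\cup N'(d)}\deg_{\ov{G_n}}(w)=\Omega(n)$, which forces the missing edges to be concentrated rather than spread regularly. That is exactly what the paper does: it fixes a set $L$ of $t=C_0\ln n$ vertices, makes $V\setminus L$ a clique, and keeps each edge incident to $L$ independently with probability $1/2$; so vertices of $L$ have complement-degree $\Theta(n)$ while the complement still has at most $\binom{t}{2}+t(n-t)=O(n\ln n)$ edges. Every one of the $O(n^4)$ antichain conditions is then witnessed by some $z\in L$ whose adjacency pattern to the at most four relevant vertices is correct with probability at least $1/16$, so each condition fails with probability at most $\exp(-(t-4)/16)=n^{-\Omega(C_0)}$ and a union bound finishes. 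You would need to abandon the near-regular model entirely and adopt some such asymmetric structure for your argument to go through.
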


\begin{proof} Let $V$ be a set of $n$ vertices and partition it into the left side $L$ and the right side $R$ with $|L| = t$ and $|R| = n-t$,
  where $t$ will be specified later. And we consider the random construction where $R$ is a clique, and we take any other edge with
  probability 1/2 independently. We are going to show that, with appropriate values of $t$ and $p$, such a graph is super geometric dominant with positive probability.
  We call a three-vertex set {\em tight} if they induce two edges.

Note that sometimes we talk about events with probability 1, such as $a \sim b$ when $\{a, b\} \subseteq R$. One may think we
chose any edge inside $R$ with probability 1. When in some cases we want to condition on some events with probability 0,
we note that the contribution is just 0 to the total probability and will not affect our bound.
We define and bound (the probability of the complement of) the following events.

For any two vertices $a$ and $b$, $D_{ab}$ is the event that there is $z \in L - \{a, b\}$, such that $a \sim z$ and $b \sim z$. And let $D$ be the intersection of all the $D_{ab}$'s, so $D$ implies that the graph has diameter at most 2. Note that there are at least $t - 2$ vertices in $L$ that differ from $a$ and $b$, so
\[\prob(\ov{D_{ab}}) \leq (1 - 1/4)^{t-2} \leq \exp(-(t-2) / 4).\]

For any two vertices $a$ and $b$, $N_{ab}$ is the event that there is $z \in L -  \{a, b\}$, such that $a \sim z$ and $b \not\sim z$.
\[\prob(\ov{N_{ab}}) \leq (1 - 1/4)^{t-2} \leq \exp(-(t-2) / 4).\]

For any three vertices $a$, $b$, and $c$, $L_{abc}$ is the event that there is $z \in L - \{a, b, c\}$, such that $\{a, b, z\}$ is tight and $\{a, c, z\}$ is not tight. Note that any outcome in $D \cap L_{abc}$ has the property that lines $\ov{ab} \not\subseteq \ov{ac}$.
Condition on any event whether $a \sim b$ and whether $a \sim c$, if we consider any $z \in L - \{a,, b, c\}$
and whether it is adjacent to $a$, $b$, and $c$, at least one of the 8 outcomes will make $\{a, b, z\}$ tight and $\{a, c, z\}$ not ---
when $a \sim b$, we want $N(z) \cap \{a, b, c\} = \{b\}$;
when $a \not\sim b$ and $a \sim c$, we want $\{a, b, c\} \subseteq N(z)$;
and when $a \not\sim b$ and $a \not\sim c$, we want $N(z) \cap \{a, b, c\} = \{a, b\}$;
So,
\[\prob(\ov{L_{abc}}) \leq (1 - 1/8)^{t-3} \leq \exp(-(t-3) / 8).\]

For any four distinct vertices $a$, $b$, $c$, and $d$, $L_{abcd}$ is the event that there is $z \in L - \{a, b, c, d\}$, such that
$\{a, b, z\}$ is tight and $\{c, d, z\}$ is not tight. Again, condition on any event whether $a \sim b$ and whether $c \sim d$, for any
$z \in L - \{a, b, c, d\}$, we consider its adjacency relations with $\{a, b, c, d\}$. Clearly, at least one of the 16 outcomes makes $\{a, b, z\}$ tight and $\{c, d, z\}$ not. So
\[\prob(\ov{L_{abcd}}) \leq (1 - 1/16)^{t-4} \leq \exp(-(t-4) / 16).\]

For any two vertices $a$ and $b$, $E_{ab}$ is the event that there is $z \in L - \{a, b\}$, such that $z \sim a$ and $\{a, b, z\}$ is not tight. Condition on $a \sim b$ or not, we just need $z \sim a$, and $zb$ has the same adjacency relation as $ab$. This gives us the bound
\[\prob(\ov{E_{ab}}) \leq (1 - 1/4)^{t-2} \leq \exp(-(t-2) / 4).\]

For any two vertices $a$ and $b$, $E'_{ab}$ is the event that either $a \not\sim b$, or there is $z \in L -  \{a, b\}$,
such that $z \not\sim a$ and $z \sim b$. Note that any outcome in $D \cap E'_{ab}$ has the property that $\ov{ab} \not\subseteq N^*(a)$.
\[\prob(\ov{E'_{ab}}) = 0 + \prob(\ov{E'_{ab}} | a \sim b) \prob(a \sim b)  \leq (1 - 1/4)^{t-2} \leq \exp(-(t-2) / 4).\]

For any three vertices $a$, $b$, and $c$, $E_{abc}$ is the event that there is $z \in L - \{a, b, c\}$
such that $z \sim a$ and $\{b, c, z\}$ is not tight; and $E'_{abc}$ is the event that there is $z \in L - \{a, b, c\}$
such that $z \not\sim a$ and $\{b, c, z\}$ is tight. Condition on any event whether $b \sim c$ or not, for each $z \in L - \{a, b, c\}$,
consider the possible edges $za$, $zb$, $zc$. Clearly, at least one of the 8 outcomes will make $E_{abc}$ happen and another makes $E'_{abc}$
happen, so both $\prob(\ov{E_{abc}})$ and $\prob(\ov{E_{abc}})$ are bounded above by $(1 - 1/8)^{t-3} \leq \exp(-(t-3) / 8)$.

Thus we defined $O(n^4)$ events and the probability of the complement of each is bounded above by $\exp(-(t-4)/16)$. We pick $t = C_0 \ln n$
for big enough $C_0$, so that the intersection of all the events has positive probability.
Note that any outcome in the intersection of all the events is a super geometric dominant graph
where the number of edges in the complement of the graph is at most $\binom{t}{2} + t(n-t) = O(n \ln n)$.

\end{proof}

\section{Lines in geometric dominant graphs --- upper bound}\label{sect.gd_ub}

\begin{defi} For each $n$, define $g(n)$ to be the least number of lines of a non-trivial geometric dominant graph on $n$ vertices.
\end{defi}

In this and the next section we give lower and upper bounds for $g(n)$.

\begin{defi} Let $G = (V, E)$ be a graph, $t > 0$, the {\em $t$-exploded graph of $G$} is defined to be the graph
$G[t]$ where the vertex set is the union of disjoint sets $V_v$'s for each $v \in V(G)$, for each $u \sim_G v$, $(V_u, V_v)$
form a complete bipartite graph in $G[t]$, and $G[t]$ has no other edges.
\end{defi}

i.e., $G[t]$ is the graph constructed from $G$ by substitute each vertex with a stable set of size $t$.
The following fact is the reason why we needed the extra requirements for super geometric dominant graphs,
as well as why we were aiming for super geometric dominant graphs which miss as few edges as possible.
The validity of the fact is easy to check.

\begin{fact}\label{fact.explode} If $G$ is a super geometric dominant graph with $n$ vertices and $m$ edges, and $t \geq 3$.
In the exploded graph $H = G[t]$, the diameter is also 2, and we have the lines
\begin{enumerate}
\item For each $v \in V$ in $G$ and $a, b \in V_v$ in $H$, $\ov{ab}_{(H)} = \{a, b\} \cup \bigcup_{v \sim c} V_c$.
\item For $u \not\sim v \in V$ in $G$, $a \in V_u$ and $b \in V_v$ in $H$,
\[ \ov{ab}_{(H)} = \{a, b\} \cup \bigcup_{c \in \ov{uv}, c \not\in \{u, v\}} V_c.\]
\item For $u \sim v \in V$ in $G$, $a \in V_u$ and $b \in V_v$ in $H$,
\[ \ov{ab}_{(H)} = \bigcup_{c \in \ov{uv}} V_c.\]
\end{enumerate}
$H$ is geometric dominant with $nt$ vertices and the number of lines is
\[ \binom{t}{2} n + \left( \binom{n}{2} - m \right) t^2 + m,\]
where each term in the sum corresponding to the three types of lines above.
\end{fact}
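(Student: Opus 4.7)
The plan is to verify the assertions of Fact~\ref{fact.explode} in the order they are stated, and to use the observation that once the line formulas are in hand the remaining claims reduce to bookkeeping.

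First I would check that $H$ has diameter $2$. Vertices in the same class $V_v$ have a common neighbor in $V_x$ for any $x \sim_G v$, and vertices in different classes $V_u, V_v$ are either adjacent (if $u \sim_G v$) or share a common neighbor in $V_x$ for any $x \in N_G(u) \cap N_G(v)$, which exists because $G$ has diameter $2$. In a diameter-$2$ graph the line $\ov{ab}$ is easy to describe: it consists of $\{a,b\}$ together with the common neighbors of $a$ and $b$ when $a \not\sim b$, and together with the vertices adjacent to exactly one of $a, b$ when $a \sim b$. So I would read off the three types of lines of $H$ by splitting on whether $a, b$ lie in the same class, in distinct non-adjacent classes, or in distinct adjacent classes, and translating adjacency patterns in $H$ back to $G$. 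For the third type I would also use that in a diameter-$2$ graph $\ov{uv}_G$ with $u \sim_G v$ consists of $\{u, v\}$ together with the vertices adjacent to exactly one of $u, v$.

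Next, I would count the distinct lines. For any line $L$ of $H$, record its \emph{full part} $F(L) = \{v \in V(G) : V_v \subseteq L\}$ and \emph{partial part} $P(L) = \{v : V_v \cap L \neq \emptyset,\, V_v \not\subseteq L\}$. Because $t \geq 3$, any partial class contains strictly fewer than $t$ vertices of $L$, so the three types have easily distinguishable signatures: type~$1$ has $|P(L)| = 1$ with two vertices of $L$ in that class; type~$2$ has $|P(L)| = 2$ with one vertex in each; type~$3$ has $P(L) = \emptyset$. By condition~$2$ of super geometric dominance, distinct pairs $\{u, v\}$ in $V(G)$ yield distinct (in fact incomparable) lines $\ov{uv}_G$, so the counts $n\binom{t}{2}$, $\bigl(\binom{n}{2} - m\bigr) t^2$, and $m$ for the three types follow.

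Finally, the main step is showing $H$ is geometric dominant. If $L_1 \subseteq L_2$, then any class fully inside $L_1$ is fully inside $L_2$ (a partial class in $L_2$ omits at least $t - 2 \geq 1$ vertex), so $F(L_1) \subseteq F(L_2)$, and any class meeting $L_1$ meets $L_2$, so $F(L_1) \cup P(L_1) \subseteq F(L_2) \cup P(L_2)$. Running through the nine type pairings, these containments translate back to one of $N^*_G(v_1) \subseteq N^*_G(v_2)$, $N^*_G(v_1) \subseteq \ov{u_2 v_2}_G$, $\ov{u_1 v_1}_G \subseteq N^*_G(v_2)$, or $\ov{u_1 v_1}_G \subseteq \ov{u_2 v_2}_G$ in $G$, each of which is forbidden by the corresponding super geometric dominance condition ($3$, $4$, $4$, or $2$) unless the two sets in $G$ are equal. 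When they are equal, a final comparison of partial parts either pins down $L_1 = L_2$ or produces a conflict in adjacency (type~$2$ requires $u \not\sim v$ while type~$3$ requires $u \sim v$). I expect the most delicate bookkeeping to be in the cross-type cases, where the $t \geq 3$ hypothesis is what prevents a partial class of one line from masquerading as a full class of the other.
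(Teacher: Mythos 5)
Your argument is correct. The paper offers no proof of this fact (the authors dismiss it as ``easy to check''), and your verification --- reading off the three line types from the diameter-2 description of lines, distinguishing them by the full/partial class signature (which is where $t \geq 3$ is genuinely needed), and reducing any containment $L_1 \subseteq L_2$ to one of the four forbidden antichain conditions in the definition of super geometric dominance --- supplies exactly the details the authors omit.
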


\begin{thm} $g(n) \in O(n^{4/3} \ln^{2/3} n)$.
\end{thm}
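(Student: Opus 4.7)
The plan is to combine the existence result of Theorem \ref{thm.super_super} with the explosion construction of Fact \ref{fact.explode}, and to choose the two parameters (the base size and the explosion factor) to balance the two dominant terms in the line count.

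First I would fix a function $n(N)$ and $t(N)$ with $n(N) \cdot t(N) = N$ (possibly with some rounding, addressed below). Applying Theorem \ref{thm.super_super} yields, for $n(N)$ large, a super geometric dominant graph $G_{n(N)}$ on $n(N)$ vertices whose complement has at most $C_0\, n(N) \ln n(N)$ edges, so $G_{n(N)}$ has at least $m \geq \binom{n(N)}{2} - C_0\, n(N) \ln n(N)$ edges. Form $H = G_{n(N)}[t(N)]$. By Fact \ref{fact.explode}, $H$ is geometric dominant on $N$ vertices, and its number of lines equals
\[
\binom{t}{2} n + \left(\binom{n}{2} - m\right) t^2 + m
\;\leq\; \tfrac{1}{2} n t^2 + C_0\, n t^2 \ln n + \tfrac{1}{2} n^2,
\]
writing $n = n(N)$ and $t = t(N)$ for brevity. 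Using $nt = N$, the first two terms are $O\!\left(\frac{N^2 \ln n}{n}\right)$, so the total is $O\!\left(\frac{N^2 \ln n}{n} + n^2\right)$. To balance these two terms I set $n^2 \asymp N^2 \ln n / n$, i.e. $n \asymp N^{2/3} (\ln N)^{1/3}$ and correspondingly $t \asymp N^{1/3} / (\ln N)^{1/3}$, which is at least $3$ for all sufficiently large $N$ (so Fact \ref{fact.explode} applies). Plugging back yields total line count $O(N^{4/3} (\ln N)^{2/3})$, as desired.

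Next I would verify non-triviality of $H$: for $t \geq 2$ each stable set $V_v$ in the exploded graph contains a pair of non-adjacent twins, so $H$ is not complete, and since $H$ contains triangles (any $a \in V_u$, $b, b' \in V_v$ for an edge $uv$ of $G_n$) it is neither a path nor $C_4$; hence $H$ is non-trivial and $g(N) \leq |\text{lines}(H)|$.

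The only genuine nuisance is the divisibility requirement $n(N) \cdot t(N) = N$; I would handle it by choosing integer parameters $\tilde n = \lfloor N^{2/3}(\ln N)^{1/3} \rfloor$ and $\tilde t = \lfloor N/\tilde n \rfloor$, applying the construction with $N' = \tilde n \tilde t \leq N$, and then obtaining a graph on $N$ vertices by padding the largest stable set $V_v$ with at most $\tilde n$ additional twins of some vertex $v$ of $G_{\tilde n}$. Since adding twins to an already-blown-up class does not change the structure of lines in Fact \ref{fact.explode} (the formula remains valid with the new sizes), the line count is affected by only a constant factor, and the asymptotic bound $O(N^{4/3} \ln^{2/3} N)$ is preserved. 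The main obstacle I anticipate is exactly this bookkeeping: making sure the rounding and padding step keeps the graph super geometric dominant's key properties intact when we explode it, and that $n(N)$ is still large enough for Theorem \ref{thm.super_super} to apply; both are easy to check but need to be done carefully.
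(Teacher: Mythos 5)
Your proposal is correct and follows essentially the same route as the paper: apply Theorem \ref{thm.super_super} to obtain a super geometric dominant base graph on roughly $N^{2/3}(\ln N)^{1/3}$ vertices, explode it by a factor $t \approx (N/\ln N)^{1/3}$, and count lines via Fact \ref{fact.explode}, balancing the $t^2 n \ln n$ and $n^2$ terms. The only divergence is the rounding step, where the paper deletes vertices while keeping the classes balanced; your alternative of placing all of the fewer-than-$\tilde n$ leftover vertices into a single class $V_v$ requires $v$ to be chosen with $O(\ln \tilde n)$ non-neighbours in the base graph (or the leftovers spread one per class), since otherwise the type-2 lines through the inflated class could contribute $\Omega(\tilde n^2 t)$ and exceed the target bound.
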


\begin{proof} Let $n_0 = \lfloor n^{2/3} \ln^{1/3} n \rfloor$ and $t = \lceil n / n_0 \rceil \in O((n / \ln n)^{1/3})$.
By Theorem \ref{thm.super_super}, there exists a super geometric dominant graph $G$ with less than $O(n_0 \ln n_0)$
missing edges. We explode $G$ to $G[t]$ and delete vertices to make the number of total vertices $n$,
while keeping the parts as balanced as possible. By Fact \ref{fact.explode}, the number of lines is bounded by
\[O(t^2 n_0 + t^2 n_0 \ln n_0 + n_0^2) = O(n^{4/3} \ln^{2/3} n) \]
\end{proof}

\section{Lines in geometric dominant graphs --- lower bound}\label{sect.gd_lb}

\begin{lem}\label{lem.abc} In a non-trivial geometric dominant graph, if $\ov{ab} = \ov{ac}$ for $b \neq c$, then $[bac]$.
\end{lem}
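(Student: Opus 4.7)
The plan is to argue by contradiction: since $b,c \in \ov{ab} = \ov{ac}$, the triple $\{a,b,c\}$ is collinear, so one of $[abc]$, $[bac]$, $[acb]$ holds. The hypothesis $\ov{ab}=\ov{ac}$ is symmetric in $b$ and $c$, so once I rule out $[abc]$ the same argument rules out $[acb]$, leaving $[bac]$ as the only possibility.

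So I assume $[abc]$ and try to produce a vertex in $\ov{ac}\setminus\ov{ab}$. The key move is \emph{not} to apply Lemma~\ref{lem.bridge} to $(a,b,c)$ directly (we have no control over whether $a\sim c$), but to an inner triple of a shortest path. Concretely, let $u$ be a neighbor of $b$ on a shortest $a$-$b$ path, so $au=ab-1$ and $ub=1$ (take $u=a$ when $ab=1$); symmetrically let $v$ be a neighbor of $b$ on a shortest $b$-$c$ path. A short check using $ab+bc=ac$ shows that $u,b,v$ are pairwise distinct with $u\sim b\sim v$ and $u\not\sim v$: otherwise $a$-$u$-$v$-$c$ is a walk of length strictly less than $ac$, contradicting $[abc]$. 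Lemma~\ref{lem.bridge} then yields a vertex $d\notin\{u,b,v\}$ adjacent to all three of $u,b,v$.

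The rest is a double triangle-inequality squeeze. On the one hand $ad \le au+1 = ab$ and $cd \le cv+1 = bc$, so $ad+cd \le ab+bc = ac$; on the other hand $ad+cd \ge ac$. Equality throughout forces $ad=ab$ and $cd=bc$. Then $[adc]$ holds, putting $d \in \ov{ac}$; while among $\{a,b,d\}$ the distances $ad=ab$ and $bd=1$ admit no betweenness, so $d \notin \ov{ab}$. This witness contradicts $\ov{ab}=\ov{ac}$. The main point to be careful with is the degenerate cases $ab=1$ or $bc=1$, where $u$ or $v$ collapses onto $a$ or $c$; one has to re-verify $u\not\sim v$ and the distance bounds in these cases, but both go through unchanged, so Lemma~\ref{lem.bridge} still applies to the actual triple of distinct vertices that remains.
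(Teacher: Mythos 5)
Your proof is correct and is essentially the paper's own argument: both assume $[abc]$, apply Lemma~\ref{lem.bridge} to the neighbors of $b$ along a shortest $a$--$c$ path through $b$ (your $u,v$ are the paper's $d,e$), and use a triangle-inequality squeeze to force the new vertex $b'$ to satisfy $ab'=ab$, $cb'=cb$, hence $b'\in\ov{ac}\setminus\ov{ab}$. The only cosmetic difference is that you phrase the squeeze as $ad+cd\le ac\le ad+cd$ rather than bounding each distance separately.
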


\begin{proof} Otherwise, without loss of generality, $[abc]$. Let $d$ be the vertex before $b$ and $e$ be the vertex after $b$ on a shortest path from $a$ to $c$ that go through $b$. (It is possible $a = d$ or $e = c$.) By Lemma \ref{lem.bridge}, there is $b'$ adjacent to $d$, $b$, $e$. $ab' \leq ad + 1 = ab$. But if $ab' < ab$, we have
\[ac \leq ab' + b'e + ec = ab' + 1 + ec = ab' + bc < ab + bc = ac,\]
which is impossible. Therefore, $ab' = ab$. Similarly, $cb' = cb$. This implies $b' \in \ov{ac}$. However, $ab' = ab$ and $b \sim b'$ implies $b' \not\in \ov{ab}$. A contradiction.
\end{proof}

Using Lemma \ref{lem.bridge} in the similarly way, it is easy to check the following.

\begin{lem}\label{lem.abcd} If $a$, $b$, $c$, $d$ are four distinct vertices (in any order) on a shortest path in a non-trivial geometric dominant graph, then $\ov{ab} \neq \ov{cd}$.
\end{lem}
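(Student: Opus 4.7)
The plan is to assume, for contradiction, that $\ov{ab} = \ov{cd}$ and then to exhibit (via Lemma~\ref{lem.bridge}) a vertex lying in exactly one of the two lines. Label the four vertices in the order they appear along the shortest path $P$ as $v_1, v_2, v_3, v_4$; by swapping the two pairs if necessary we may assume $v_2 \in \{a,b\}$. The goal is to construct a ``parallel copy'' $v'$ of $v_2$, meaning $v_2 v' = 1$ and $v_j v' = v_j v_2$ for each $j \in \{1,3,4\}$; such a $v'$ will lie in $\ov{cd}$ but not in $\ov{ab}$.

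For the construction, let $p$ and $s$ be the immediate predecessor and successor of $v_2$ on $P$, so that $p \sim v_2 \sim s$ and $p \not\sim s$ (since $P$ is a shortest path). Lemma~\ref{lem.bridge} then supplies a vertex $v' \notin \{p, v_2, s\}$ adjacent to $p$, $v_2$, and $s$. The upper bound $v_j v' \leq v_j v_2$ is immediate from the two-step detour $v_j \to p \to v'$ (or $v_j \to s \to v'$), depending on which side of $v_2$ the vertex $v_j$ lies on. The matching lower bound is the one step needing real care: if $v_j v' < v_j v_2$ for some $v_j$ lying before $v_2$ on $P$, then $v_j s \leq v_j v' + v' s < v_j v_2 + 1$, contradicting $v_j s = v_j v_2 + 1$ (which is forced by $P$ being a shortest path); the symmetric argument through $p$ handles the case that $v_j$ sits after $v_2$.

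With the distance-preservation $v_j v' = v_j v_2$ in hand, the rest is triangle-equality bookkeeping. Since $v_2 \in \{a,b\}$, we have $\ov{ab} = \ov{v_2 v_k}$ for the partner $v_k$ of $v_2$, and the triangle $(v_2, v_k, v')$ has side-lengths $1$, $v_2 v_k$, $v_2 v_k$, which admit no triangle equality, so $v' \notin \ov{ab}$. On the other side, $\{c,d\}$ is a 2-subset of $\{v_1, v_3, v_4\}$, and the triangle $(c, d, v')$ has the same side-lengths as $(c, d, v_2)$, so $v' \in \ov{cd}$ iff $v_2 \in \ov{cd}$. In each of the three possibilities for $\{c,d\}$, namely $\{v_3,v_4\}$, $\{v_1,v_4\}$, or $\{v_1,v_3\}$, one reads off a betweenness relation --- $[v_2 v_3 v_4]$, $[v_1 v_2 v_4]$, or $[v_1 v_2 v_3]$ respectively --- directly from the path ordering, which places $v_2$ on $\ov{cd}$. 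Hence $v' \in \ov{cd} \setminus \ov{ab}$, contradicting $\ov{ab} = \ov{cd}$.

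The main obstacle is the distance-preservation step $v_j v' = v_j v_2$: a priori the bridge $v'$ could sit strictly closer to some $v_j$ than $v_2$ does, but the shortest-path property of $P$ rules this out because any such shortcut through $v'$ would yield a $v_j$-to-$s$ (or $v_j$-to-$p$) walk shorter than the one that $P$ itself provides. Everything else is uniform across the three possible configurations of the two pairs (separate, interleaved, nested), since the needed betweenness relations placing $v_2$ on $\ov{cd}$ are read off directly from the path ordering in each case.
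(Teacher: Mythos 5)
Your proof is correct and is essentially the argument the paper intends: the paper dispatches this lemma with ``using Lemma~\ref{lem.bridge} in the similar way'' as in Lemma~\ref{lem.abc}, i.e.\ precisely your construction of a bridge vertex $v'$ that duplicates the distances of the second vertex $v_2$ to the other three while sitting at distance $1$ from $v_2$, hence lies on $\ov{cd}$ but not on $\ov{ab}$. Your write-up carefully verifies the distance-preservation step and the case analysis over the three placements of $\{c,d\}$, which is exactly the content the authors left implicit.
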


\begin{lem}\label{lem.xyz} If $a$, $b$, and $c$ are three points in a metric space where the distances are $ab = x+y$, $bc = y+z$, and $ca=z+x$ for some positive values of $x$, $y$, and $z$, then $a$, $b$, $c$ are not collinear.
\end{lem}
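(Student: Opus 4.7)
\begin{proofof}{Proof proposal for Lemma \ref{lem.xyz}}
The plan is simply to check each of the three possible collinearity conditions and derive a contradiction from the positivity of $x$, $y$, $z$. Recall that $a$, $b$, $c$ are collinear iff one of $[abc]$, $[acb]$, $[bac]$ holds, i.e., one of the three triangle inequalities among these points is an equality.

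First I would suppose for contradiction that $[abc]$ holds, so $ab + bc = ac$. Substituting the given distances, this yields $(x+y)+(y+z) = z+x$, i.e., $2y = 0$, contradicting $y > 0$. Next, assuming $[acb]$, we would have $ac + cb = ab$, giving $(z+x)+(y+z) = x+y$, i.e., $2z = 0$, contradicting $z > 0$. Finally, assuming $[bac]$, we would have $ba + ac = bc$, giving $(x+y)+(z+x) = y+z$, i.e., $2x = 0$, contradicting $x > 0$.

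Since none of the three defining conditions for collinearity can hold, $a$, $b$, $c$ are not collinear. There is no real obstacle here; the lemma is just a direct bookkeeping observation, and the only ``care'' needed is to remember that collinearity in a metric space is defined by \emph{any one} of the three betweenness relations being an equality, so all three cases must be ruled out.
\end{proofof}
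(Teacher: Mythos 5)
Your proof is correct and matches the paper's argument, which simply notes that each of the three distances is strictly less than the sum of the other two (e.g.\ $ab = x+y < (y+z)+(z+x)$ since $2z>0$); your case-by-case derivation of $2x=0$, $2y=0$, $2z=0$ is the same computation phrased contrapositively.
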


\begin{proof} It is easy to check any of the three distances is strictly less than the sum of the other two.
\end{proof}

\begin{lem}\label{lem.star_generator} In a non-trivial geometric dominant graph, a vertex $a$ and $t$ other vertices $B = \{b_i : i = 1, ..., t\}$. If $\ov{a b_i}$ are all the same, then $\ov{b_i b_j} \cap B = \{b_i, b_j\}$ for any $i \neq j$.
\end{lem}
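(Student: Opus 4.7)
The plan is to argue by contradiction: suppose some $b_k$ with $k \notin \{i,j\}$ lies in $\ov{b_ib_j}$, and then derive that the three vertices $b_i, b_j, b_k$ sit in a configuration that Lemma \ref{lem.xyz} forbids.

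First I would exploit the hypothesis $\ov{ab_i} = \ov{ab_j} = \ov{ab_k}$. Applying Lemma \ref{lem.abc} to each pair of these equal lines yields the three ``geodesic through $a$'' relations
\[ [b_i\, a\, b_j], \quad [b_i\, a\, b_k], \quad [b_j\, a\, b_k]. \]
Writing $x_i = ab_i$, $x_j = ab_j$, $x_k = ab_k$, Fact 1(d) then translates these into the distance equalities
\[ b_ib_j = x_i + x_j, \quad b_ib_k = x_i + x_k, \quad b_jb_k = x_j + x_k. \]
Note $x_i, x_j, x_k > 0$ because $a$ is distinct from every element of $B$.

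At this point the conclusion is immediate: the three distances among $b_i, b_j, b_k$ have exactly the form $x+y$, $y+z$, $z+x$ with positive $x,y,z$, so Lemma \ref{lem.xyz} asserts that $b_i, b_j, b_k$ are not collinear. In particular $b_k \notin \ov{b_ib_j}$, contradicting the assumption. Hence $\ov{b_ib_j} \cap B = \{b_i, b_j\}$, as required.

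I do not foresee a genuine obstacle in this argument; the lemma is essentially a repackaging of Lemmas \ref{lem.abc} and \ref{lem.xyz}, and the only thing worth being careful about is that Lemma \ref{lem.abc} requires the ambient graph to be non-trivial and geometric dominant, which is part of the hypothesis here.
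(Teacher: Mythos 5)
Your proof is correct and follows exactly the paper's own argument: apply Lemma \ref{lem.abc} to the three equal lines through $a$ to get the three distance identities, then invoke Lemma \ref{lem.xyz} to rule out collinearity of $b_i, b_j, b_k$. The only cosmetic difference is that you phrase it as a contradiction while the paper states the conclusion directly.
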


\begin{proof} Let $b_i$, $b_j$, and $b_k$ be 3 distinct vertices in $B$. By Lemma \ref{lem.abc}, $b_ib_j = ab_i + ab_j$, $b_jb_k = ab_j + ab_k$, $b_kb_i = ab_k + ab_i$, then Lemma \ref{lem.xyz} implies that $b_k \not\in \ov{b_ib_j}$.
\end{proof}

\begin{lem}\label{lem.parallel} Let $a$, $b$, $c$, and $d$ be four distinct vertices in a non-trivial geometric dominant graph, and $\ov{ab} = \ov{cd}$. Then we have $ab = cd$, $ac = bd$ and $ad = bc$; furthermore, if $ab > 1$, then $[acb]$, $[adb]$, $[cad]$, and $[cbd]$.
\end{lem}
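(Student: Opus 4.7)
The plan is a finite case analysis over the betweenness relations implied by $c, d \in \ov{ab}$ and $a, b \in \ov{cd}$. Writing $\alpha = ab$, $\gamma = cd$, $p = ac$, $q = ad$, $r = bc$, $s = bd$, each of the four incidences contributes one of three linear equations in these six distances; for instance, $c \in \ov{ab}$ gives $\alpha = p + r$, or $r = p + \alpha$, or $p = \alpha + r$. The resulting $3^4 = 81$ sub-cases collapse substantially under the symmetries $a \leftrightarrow b$ and $c \leftrightarrow d$, both of which preserve the hypothesis.

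My first sweep uses Lemma \ref{lem.abcd}: any combination whose equations align $a, b, c, d$ on a common shortest path in some order (for example $[cab]$ paired with $[cad]$, which, once $b$'s position on $\ov{cd}$ is fixed, forces one of $[cabd]$, $[cadb]$, or $[cdab]$) directly contradicts $\ov{ab} = \ov{cd}$ and is discarded. A second sweep eliminates combinations whose linear equations force one of the positive distances $\alpha, \gamma, p, q, r, s$ to vanish. Up to the two symmetries, what remains is exactly two patterns: a \emph{crossed} pattern $[acb], [adb], [cad], [cbd]$, and a \emph{parallelogram} pattern $[cab], [abd], [acd], [cdb]$ together with its $c \leftrightarrow d$ image. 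In each surviving pattern a short elimination among $\alpha, \gamma, p, q, r, s$ yields $p = s$, $q = r$, $\alpha = \gamma$, i.e.\ the claim $ab = cd$, $ac = bd$, $ad = bc$.

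For the ``furthermore'' clause I assume $ab > 1$ and rule out the parallelogram. Since $ab > 1$, a shortest $ab$-path contains an interior vertex $x$ with $[axb]$, and the hypothesis $\ov{ab} = \ov{cd}$ forces $x \in \ov{cd}$. In the parallelogram $[cab], [abd], [acd], [cdb]$ (where $ab = cd$ and $ac = bd$ are already in hand), concatenating $[axb]$ with $[cab]$ and with $[abd]$ yields shortest-path decompositions $[caxb]$ and $[axbd]$, giving $cx = ac + ax$ and $xd = xb + bd = xb + ac$. Substituting into each of the three possible collinearities of $x$ with $c, d$ and using $cd = ax + xb$ reduces to $ax = 0$, $xb = 0$, or $ac = 0$, each a contradiction with the distinctness of the vertices. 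The mirror parallelogram is excluded identically, so only the crossed pattern survives, and it already supplies the four strict betweennesses.

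The only real obstacle is bookkeeping: tracking the many sub-cases while invoking Lemma \ref{lem.abcd} in the correct four-vertex order. Once each betweenness is encoded as a single linear equation in the six distances, every sub-case resolves either by a direct appeal to Lemma \ref{lem.abcd} or by a one-line elimination that sends a positive distance to zero.
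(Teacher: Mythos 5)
Your proposal is correct and follows essentially the same route as the paper: the same two surviving configurations (the paper's Case 1 is your parallelogram, forced to have $ab=1$ by exactly the interior-vertex/Lemma \ref{lem.xyz} argument you give; its Case 2 is your crossed pattern), eliminated by the same tools (Lemma \ref{lem.abcd} for aligned quadruples, positivity of distances). The only difference is organizational: the paper avoids the $3^4$ enumeration by first showing that if any one of $[acb],[adb],[cad],[cbd]$ holds then all four do, and otherwise pinning down the parallelogram by taking $ac$ minimal among $ac,ad,bc,bd$ --- a shortcut worth adopting, since your claim that ``exactly two patterns remain'' is the one step you assert rather than exhibit.
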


\begin{proof}

{\em Case 1:} None of $[acb]$, $[adb]$, $[cad]$, and $[cbd]$ holds.
Among the distances $ac$, $bd$, $ad$, and $bc$, we may assume that $ac$ is (one of) the smallest. Because $\{a, c\} \subseteq \ov{ab} = \ov{cd}$, we have, by the minimality of $ac$,
\begin{equation}\label{eq.bc_ad}
[bac] \text{ i.e. } bc = ab + ac, \;\text{ and }\; [acd] \text{ i.e. } ad = ac + cd.
\end{equation}
Because $d \in \ov{ab}$, we have $[abd]$ or $[dab]$ (we assumed $[adb]$ does not happen in this case).
$[dab]$ and $[acd]$ in (\ref{eq.bc_ad}) imply $[dcab]$, and in turn  $\ov{ab} \neq \ov{cd}$ by Lemma \ref{lem.abcd}.
So we must have
\begin{equation}\label{eq.d_from_ab}
[abd], \text{ i.e. } bd = ad - ab = ac + cd - ab.
\end{equation}
Because $b \in \ov{cd}$, we have $[bcd]$ or $[cdb]$ (we assumed $[cbd]$ does not happen in this case).
$[bcd]$ and $[bac]$ in (\ref{eq.bc_ad}) imply $[bacd]$, and in turn  $\ov{ab} \neq \ov{cd}$ by Lemma \ref{lem.abcd}.
So $[cdb]$ and $bd = bc - cd = ac + ab - cd$. Together with (\ref{eq.d_from_ab}) and (\ref{eq.bc_ad}), we get $ab = cd$, $ac = bd$, and $ad = bc$. Furthermore, we have $ab = 1$ in this case. Otherwise, pick one vertex $z$ such that $[azb]$, $az=1$ and $zb=ab-1$.
By (\ref{eq.bc_ad}), $[cazb]$ and $cz = ca + 1$.
Also by (\ref{eq.bc_ad}) $da = ac + cd = db + ba$, so $[dbza]$ and $dz = db + bz = ac + (ab - 1)$. $cd = ab = (ab - 1) + 1$.
Then by Lemma \ref{lem.xyz} we have $z \not\in \ov{cd}$. Yet $z \in \ov{ab}$, a contradiction.

{\em Case 2:} Some of $[acb]$, $[adb]$, $[cad]$, and $[cbd]$ holds. We may assume
\begin{equation}\label{eq.abcd_2_1}
[acb] \text{ i.e. } ab = ac +  bc
\end{equation}
$[dab]$ or $[dba]$ implies $[dacb]$ or $[dbca]$, both contradict the fact $\ov{ab} = \ov{cd}$ by Lemma \ref{lem.abcd}. Because $d \in \ov{ab}$, we must have
\begin{equation}\label{eq.abcd_2_2}
[adb] \text{ i.e. } ab = ad +  bd
\end{equation}
Now $[cda]$ and (\ref{eq.abcd_2_1}) imply $[bcda]$, $[dca]$ and (\ref{eq.abcd_2_2}) imply $[bdca]$. Both cases are impossible by Lemma \ref{lem.abcd}. Because $a \in \ov{cd}$, we get
\begin{equation}\label{eq.abcd_2_3}
[cad] \text{ i.e. } cd = ac +  ad
\end{equation}
Similarly,
\begin{equation}\label{eq.abcd_2_4}
[cbd] \text{ i.e. } cd = bc +  bd
\end{equation}
Equations (\ref{eq.abcd_2_1}) to (\ref{eq.abcd_2_4}) imply that $ab = cd$, $ac = bd$, and $ad = bc$.
\end{proof}

Note that when $a \sim b$, $ac$ and $bc$ differ by at most 1. We immediately have

\begin{lem}\label{lem.parity} In any connected graph, if $a \sim b$, then for any $c$, $c \in \ov{ab}$ if and only if $ac$ and $bc$ are different in parity. I.e.,
\[ c \in \ov{ab} \Leftrightarrow ac \oplus bc = 1 \text{ (in the binary field $\mathbf{F}_2$). } \]
\end{lem}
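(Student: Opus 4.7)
The plan is to unpack the definition of $\ov{ab}$ in the special case $ab=1$ and use the triangle inequality to force a tight parity constraint. First I would dispatch the trivial cases $c=a$ and $c=b$: then $\{ac,bc\}=\{0,1\}$, which differ in parity, and $c\in\ov{ab}$ by definition, so the biconditional holds.

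Next, for $c \notin \{a,b\}$, I would recall that $c\in \ov{ab}$ precisely when one of $[cab]$, $[acb]$, $[abc]$ holds. Since $ab=1$, the relation $[acb]$ would require $ac+cb=1$, forcing one of the distances to be $0$, which is impossible for three distinct vertices. So $c\in\ov{ab}$ iff $[cab]$ or $[abc]$; the former gives $cb=ca+1$ and the latter gives $ac=bc+1$, and in both cases $|ac-bc|=1$.

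Conversely, the triangle inequality together with $ab=1$ forces $|ac-bc|\le 1$, so $ac$ and $bc$ always differ by $0$ or $1$. Therefore the parity of $ac$ and $bc$ differ if and only if $|ac-bc|=1$, if and only if one of $[cab]$ or $[abc]$ holds, if and only if $c \in \ov{ab}$. This yields the desired equivalence $c\in\ov{ab} \Leftrightarrow ac\oplus bc = 1$.

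I do not expect any real obstacle here: the statement is essentially a direct consequence of the fact that the distance function on a graph changes by at most $1$ across an edge, and the main care needed is merely to check that $[acb]$ cannot occur when $ab=1$ and $c\notin\{a,b\}$.
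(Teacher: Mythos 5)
Your proof is correct and is exactly the argument the paper intends: the paper gives only the one-line remark that $ac$ and $bc$ differ by at most $1$ when $a\sim b$ and declares the lemma immediate, and your write-up simply fills in the same reasoning (ruling out $[acb]$ and equating parity difference with $|ac-bc|=1$). No issues.
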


\begin{lem}\label{lem.unit_square} In a non-trivial geometric dominant graph, if $\ov{ab} = \ov{cd}$ for distinct vertices $a$, $b$, $c$, and $d$, and $ab = cd = ac = bd = 1$, then $\ov{ac} = \ov{bd}$.
\end{lem}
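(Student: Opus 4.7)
The plan is to exploit the four hypotheses $ab = cd = ac = bd = 1$ all at once: each one is an edge of the graph, so Lemma~\ref{lem.parity} becomes applicable to every one of the four pairs $ab$, $cd$, $ac$, $bd$. The idea is that, along an edge, membership in the corresponding line is determined purely by the parity of the distances to the two endpoints, so the assumed equality $\ov{ab}=\ov{cd}$ translates into a single parity identity that also characterizes $\ov{ac}$ and $\ov{bd}$.

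Concretely, I would argue as follows. Fix an arbitrary vertex $z$. Since $a\sim b$, Lemma~\ref{lem.parity} gives $z\in\ov{ab}$ iff $az+bz\equiv 1\pmod 2$; since $c\sim d$, the same lemma gives $z\in\ov{cd}$ iff $cz+dz\equiv 1\pmod 2$. The assumption $\ov{ab}=\ov{cd}$ then forces
\[ az+bz \;\equiv\; cz+dz \pmod 2 \]
for every $z\in V$, which rearranges to $az+cz\equiv bz+dz\pmod 2$. Because $a\sim c$ and $b\sim d$, one more application of Lemma~\ref{lem.parity} to each of these edges yields $z\in\ov{ac} \iff az+cz\equiv 1\pmod 2 \iff bz+dz\equiv 1\pmod 2 \iff z\in\ov{bd}$. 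Since $z$ was arbitrary, $\ov{ac}=\ov{bd}$.

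I do not anticipate any real obstacle. The only thing worth double-checking is that Lemma~\ref{lem.parity} is indeed usable at $z\in\{a,b,c,d\}$, but this is automatic because $z=a$ gives $aa=0$, $ba=1$ of opposite parity (and correctly $a\in\ov{ab}$), and similarly for the other endpoints. Note also that we never need to invoke Lemma~\ref{lem.parallel} or the non-triviality of the graph for this particular statement; the parity argument is purely local and uses nothing beyond the fact that the four given pairs are edges.
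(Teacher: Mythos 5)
Your argument is correct and is essentially identical to the paper's own proof: both apply Lemma~\ref{lem.parity} to the edges $ab$ and $cd$ to turn $\ov{ab}=\ov{cd}$ into the parity identity $xa\oplus xb=xc\oplus xd$ for all $x$, rearrange it to $xa\oplus xc=xb\oplus xd$, and read off $\ov{ac}=\ov{bd}$ via the edges $ac$ and $bd$. Your extra remarks (that the identity holds at the endpoints and that neither Lemma~\ref{lem.parallel} nor non-triviality is needed) are accurate.
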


\begin{proof}
For any vertex $x$, by Lemma \ref{lem.parity} and the assumption that $\ov{ab} = \ov{cd}$,
\[ xa \oplus xb = xc \oplus xd \Rightarrow xa \oplus xc = xb \oplus xd ,\]
the latter implies that $x$ is in both $\ov{ac}$ and $\ov{bd}$, or none of them.
\end{proof}

\begin{defi} Let $L$ be a line in a graph $G = (V, E)$, the {\em generator graph of $L$ in $G$} is defined to be $H(L) = H_G(L) = (V, E_H)$ where $a \sim_{H(L)} b$ whenever $\ov{ab} = L$ in $G$.
\end{defi}

\begin{lem}\label{lem.bip_blocks} For a non-trivial geometric dominant graph $G$ and any line $L$, every connected component of $H(L)$ is a complete bipartite graph. Furthermore, if $H(L)$ is not a star, then

(a) There is a constant $d(L)$ such that for any $a \sim_{H(L)} b$ (i.e., $\ov{ab} = L$), their distance in $G$ satisfies $ab = d(L)$;
and $d(L) = 1$ unless $H(L)$ is a matching.

  (b) Call either side (of the vertex set) of a connected bipartite component {\em with at least two vertices} a block.
For any two blocks $X$ and $Y$, there is a constant $d(X, Y)$ such that for any $x \in X$ and $y \in Y$,
the distance in $G$ satisfies $xy = d(X, Y)$.
\end{lem}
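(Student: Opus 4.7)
The plan is to establish that each connected component of $H(L)$ is complete bipartite and then read off parts (a) and (b) from Lemma~\ref{lem.parallel}.

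For triangle-freeness, I note that if three distinct $a, b, c$ satisfy $\ov{ab} = \ov{ac} = \ov{bc} = L$, then Lemma~\ref{lem.abc} (on $\ov{ab} = \ov{ac}$, sharing $a$) gives $[bac]$, while the same lemma (on $\ov{ab} = \ov{bc}$, sharing $b$) gives $[abc]$, contradicting Fact 1(b). For bipartiteness I rule out odd cycles: in any odd cycle of $H(L)$-edges, Lemma~\ref{lem.parallel} applied to pairs of disjoint edges forces a common $G$-length $d$, after which combining the parallelogram identities with the consecutive-edge identities from Lemma~\ref{lem.abc} produces two different expressions for some diagonal (e.g.\ $v_0 v_3$ in $C_5$ equals both $d$ via Lemma~\ref{lem.parallel} on $\{v_0, v_1\}, \{v_2, v_3\}$ and $2d$ via Lemma~\ref{lem.abc} on $\ov{v_3 v_4} = \ov{v_4 v_0}$), which forces the impossible $d = 0$. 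For completeness inside a bipartite component I show any $P_4$ of $H(L)$-edges closes into a $K_{2,2}$: given $x - y_1 - x_1 - y$, Lemma~\ref{lem.parallel} on the disjoint pair $\{x, y_1\}, \{x_1, y\}$ combined with its ``if $ab > 1$'' clause forces the common edge length to be $1$, so $x \sim_G y$; Lemma~\ref{lem.parity} together with the existing line equalities then identifies $\ov{xy}$ with $L$. The main obstacle is this last identification, where the no-proper-inclusion property of geometric dominance is needed to pin $\ov{xy}$ down to $L$ throughout the ambient graph rather than just on the four vertices at hand.

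Once the complete bipartite structure is in place, part (a) follows quickly. The not-a-star hypothesis supplies two vertex-disjoint edges of $H(L)$, whose $G$-lengths coincide by Lemma~\ref{lem.parallel}; I define $d(L)$ to be this common value. Any other edge $\{a, b\}$ of $H(L)$ is vertex-disjoint from at least one of the two fixed edges (otherwise $\{a, b\}$ would join their two components), so $ab = d(L)$ by Lemma~\ref{lem.parallel} again. For $d(L) = 1$ unless $H(L)$ is a matching, pick in the non-matching case two edges $\{c, v_1\}, \{c, v_2\}$ through a common vertex $c$ in the same component, together with a third edge $\{e, f\}$ vertex-disjoint from both (which exists because $H(L)$ is not a star). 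If $d(L) > 1$, the ``if $ab > 1$'' clause of Lemma~\ref{lem.parallel} applied to the pairs $(\{c, v_1\}, \{e, f\})$ and $(\{c, v_2\}, \{e, f\})$ gives $[c e v_1]$ and $[c e v_2]$, so $e v_1 = e v_2 = d(L) - c e$. Combined with $v_1 v_2 = 2 d(L)$ from Lemma~\ref{lem.abc}, the triangle inequality $v_1 v_2 \le e v_1 + e v_2$ forces $ce \le 0$, contradicting $c \ne e$.

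For part (b), take $x \in X$ and $y \in Y$, and pick opposite-side vertices $c_X, c_Y$ in each block's bipartite component so that $\{x, c_X\}$ and $\{y, c_Y\}$ are $H(L)$-edges. When $X$ and $Y$ lie in distinct components these two edges are vertex-disjoint, so Lemma~\ref{lem.parallel} gives $xy = c_X c_Y$, a value that depends only on the two components and not on the chosen $x, y$; when $X$ and $Y$ are the two sides of a single component, completeness pairs every $x$ with every $y$ by an $H(L)$-edge of length $d(L)$ from part (a). Either way, $d(X, Y)$ is well defined.
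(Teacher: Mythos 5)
Your overall strategy matches the paper's (odd cycles killed by chaining Lemma \ref{lem.parallel} against Lemma \ref{lem.abc}; completeness by closing each $P_4$ of $H(L)$ into a $K_{2,2}$; (a) and (b) read off Lemma \ref{lem.parallel}), but the completeness step has a genuine gap. For a path $x\sim_{H(L)} y_1\sim_{H(L)} x_1\sim_{H(L)} y$, applying Lemma \ref{lem.parallel} to the disjoint pair $\{x,y_1\},\{x_1,y\}$ and playing its ``if $ab>1$'' clause against $[x\,y_1\,x_1]$ from Lemma \ref{lem.abc} does force $xy_1=x_1y=1$ --- but it says nothing about the middle edge $y_1x_1$, and the parallelogram identity gives $xy=y_1x_1$, not $xy=xy_1$. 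The middle edge of a $P_4$ has no vertex-disjoint partner edge inside that $P_4$, so Lemma \ref{lem.parallel} cannot be applied to it, and a priori the configuration $xy_1=x_1y=1$, $y_1x_1=xy=k>1$, $xx_1=yy_1=k+1$ is metrically consistent. Without $xy=1$ you cannot invoke Lemma \ref{lem.parity}, and the $K_{2,2}$-closure collapses. The paper spends a dedicated argument on exactly this point (its ``We further prove that $bc=1$'' step): it picks a neighbor $z$ of $a$ on a shortest $a$--$d$ path, computes $bz=2$ and $cz=1+(ad-1)$, and uses Lemma \ref{lem.xyz} to show $z\notin\ov{bc}$ while $[bazd]$ forces $z\in\ov{ab}=\ov{bc}$. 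You need some version of that auxiliary-vertex argument; it is the one nontrivial idea your write-up is missing.

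There are also two smaller slips in (a). First, the claim that every edge of $H(L)$ is vertex-disjoint from one of two fixed disjoint edges is false when those two edges lie in the same component: in a $K_{2,2}$ with sides $\{u_1,u_2\},\{w_1,w_2\}$, the edge $u_1w_2$ meets both $u_1w_1$ and $u_2w_2$, and the disjointness graph on the four edges of $K_{2,2}$ is disconnected, so Lemma \ref{lem.parallel} alone does not propagate the common length; one needs the separately established fact that all edges of a non-star component have $G$-length $1$. Second, when $H(L)$ is a single $P_4$ component there is no third edge disjoint from both edges through the middle vertex, so the witness $\{e,f\}$ in your ``$d(L)=1$ unless matching'' argument need not exist; the paper sidesteps this by splitting into ``one non-star component'' (where the length is $1$ by the claim above) versus ``at least two components'' (where the disjoint witness does exist). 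The bipartiteness argument and part (b) are fine and essentially identical to the paper's.
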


\begin{proof} We first prove that $H(L)$ is bipartite. Assume there is an odd cycle $a_1 a_2 ... a_{2t+1}$, Lemma \ref{lem.abc} implies that $t > 1$. Successively apply Lemma \ref{lem.parallel},
we have the distances
\[ a_{t+1} a_{t+2} = a_{t}a_{t+3} = ... = a_2 a_{2t+1} .\]
But note that $\ov{a_{t+1} a_{t+2}} = \ov{a_1 a_2} = \ov{a_1 a_{2t+1}} = L$, by Lemma \ref{lem.parallel} and $t > 1$ we have
\[a_2 a_{2t+1} = a_{t+1} a_{t+2} = a_1 a_2 = a_1 a_{2t+1}.\]
So $a_{2t+1} \not\in \ov{a_1 a_2} = L$, a contradiction.

Next consider any connected component of $H$ that is not a star, pick any (when the component has at least two vertices on both sides, we always have) four distinct vertices in the component such that
\[ a \sim b \sim c \sim d \text{ in $H$} .\]
By Lemma \ref{lem.parallel}, we have $ab = cd$, $ac = bd$, and $ad = bc$ in $G$. By Lemma \ref{lem.abc}, we have $[abc]$ and $[bcd]$, i.e.,
\[ ac = ab + bc \text{ and } bd = bc + cd.\]
By Lemma \ref{lem.parallel} and the fact that $[acb]$ does not hold, we have $ab = cd = 1$.
We further prove that $bc = 1$. Otherwise, $ad = bc > 1$, we may pick a vertex $z$ such that $[azd]$, $az = 1$ and $zd = ad - 1$.
Note that
\[bd = bc + cd = ad + ab = ba + az + zd \Rightarrow [bazd] \Rightarrow bz = ba + az = 2.\]
\[ac = ab + bc = cd + ad = az + zd + dc \Rightarrow [azdc] \Rightarrow cz = cd + dz = 1 + (ad - 1). \]
Note that $bc = ad = (ad-1) + 1$. Lemma \ref{lem.xyz} implies that $z \not\in \ov{bc}$, yet $[bazd]$ implies $z \in \ov{ab} = \ov{bc}$.
A contradiction. This proves that

{\em Claim 1.} Whenever $a \sim b \sim c \sim d$ for four distinct vertices in $H$, we have $a \sim b \sim c \sim d$ in $G$.

And by Lemma \ref{lem.parallel} and \ref{lem.unit_square}, we have $d \sim a$ in $G$ and then $d \sim a$ in $H$. This means there can be no vertices with distance 3 in $H$, therefore
every component is a complete bipartite graph.

(a) If there is only one connected component in $H$ and it is not
a star, we proved it is complete bipartite, and by Claim 1, $ab = 1$ in $G$ for all $a \sim_{H(L)} b$.

If there are at least two components, by applying Lemma \ref{lem.parallel} we get all the distances $ab$'s for $a \sim_{H(L)} b$ are the same.
We still need to show that when the common distance is bigger than 1, $H$ must be a matching. Otherwise, we have distinct vertices $a$, $b$, $c$, $d$, and $e$ satisfying $\ov{ab} = \ov{ac} = \ov{de}$
and $ab = ac = de > 1$. Lemma \ref{lem.abc} implies that $[bac]$, Lemma \ref{lem.parallel} and $ab = ac >1$ imply that $[bda]$ and $[cda]$,
so $bc \leq bd + cd < ba + ca = bc$, a contradiction.

(b) Let $X$ and $Y$ be two blocks. Note that $|X|$ and $|Y|$ are both greater than 1. If $X$ and $Y$ are the two sides of the same component, (a) implies that $xy$ is 1 for any $x \in X$, $y \in Y$. Otherwise, suppose $(X, X')$, $(Y, Y')$ are two different complete bipartite connected components
of $H$. Let $x_1, x_2 \in X$, $x' \in X'$, $y_1, y_2 \in Y$, and $y' \in Y'$ (it is possible that $x_1 = x_2$ or $y_1 = y_2$). By Lemma \ref{lem.parallel} and note that
\[ \ov{x_i x'} = \ov{y_j y'} = L, i,j  \in \{1, 2\},\]
we have, in $G$,
\[ x_1 y_1 = x'y' = x_2 y_2 .\]

\end{proof}

\begin{lem}\label{lem.parallel_non_edge} In a geometric dominant graph, if there is a line $L$ and $2t$ distinct vertices
$a_i$, $b_i$ ($1 \leq i \leq t$) such that $a_i \not\sim b_i$ and $\ov{a_ib_i} = L$ for all $i$,
then there are at least  $\binom{t}{2}$ different lines in $G$.
\end{lem}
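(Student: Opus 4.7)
The plan is a pigeonhole count set up by first pinning down the structure of the generator graph of $L$. Apply Lemma~\ref{lem.bip_blocks} to $L$: because $a_ib_i=d\ge 2$ and we already have $t\ge 2$ pairwise disjoint edges $(a_i,b_i)$ in $H(L)$ (so $H(L)$ is not a single star), $H(L)$ cannot contain a connected complete bipartite component larger than $K_{1,1}$ (which would force $d(L)=1$); hence $H(L)$ is precisely a matching whose edges are the $(a_i,b_i)$. Consequently every pair in $S:=\{a_1,b_1,\ldots,a_t,b_t\}$ other than these $t$ pairs generates some line distinct from $L$.

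Second, I will use Lemma~\ref{lem.parallel}: for $i\ne j$, the equality $\ov{a_ib_i}=\ov{a_jb_j}=L$ yields the parallelogram $a_ia_j=b_ib_j$ and $a_ib_j=b_ia_j$. There are $\binom{2t}{2}-t=4\binom{t}{2}$ non-$L$ pairs in $S$, and the goal is to show that each non-$L$ line in $G$ is generated by at most $4$ such pairs, yielding at least $4\binom{t}{2}/4=\binom{t}{2}$ distinct non-$L$ lines.

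To establish the ``at most four'' bound, fix a non-$L$ line $L'$ and consider $H(L')$. By Lemma~\ref{lem.bip_blocks}, $H(L')$ decomposes into complete bipartite components. The key observation is that no component of $H(L')$ may have $a_i$ on one side of its bipartition and $b_i$ on the other, since this would make $(a_i,b_i)$ an edge of $H(L')$ and hence $\ov{a_ib_i}=L'\ne L$, contradicting the first step. Thus each pair $\{a_i,b_i\}$ either lies entirely on one side of each component, or at most one of its two vertices is in the component at all. A case analysis combining this with the parallelogram distances from Lemma~\ref{lem.parallel}, the constant cross-block distance supplied by Lemma~\ref{lem.bip_blocks}(b), and Lemma~\ref{lem.abc} will show that any component of $H(L')$ meeting $S$ must fit the template of a single index-pair $\{i,j\}$: one pair $\{a_i,b_i\}$ on one side and $\{a_j,b_j\}$ on the other, contributing at most $2\cdot 2=4$ edges within $S$.

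The main obstacle is this final case analysis: ruling out ``thick'' components of $H(L')$ in which two or more pair-indices lie on the same side of the bipartition. The plan for this step is to show that any such configuration forces an additional parallelogram through Lemma~\ref{lem.parallel}, hence additional generating pairs, hence contradictory distance equalities via the constant cross-block distance of Lemma~\ref{lem.bip_blocks}(b) or the no-proper-inclusion condition of geometric dominance. Once each non-$L$ line is seen to admit at most $4$ generating pairs from $S$, the pigeonhole step delivers at least $\binom{t}{2}$ distinct non-$L$ lines, completing the proof.
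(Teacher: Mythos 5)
Your overall strategy differs from the paper's. The paper does not try to bound how many of the $4\binom{t}{2}$ cross pairs can generate a given line; instead, for each index pair $\{i,j\}$ it selects a single representative (the ``original pair,'' chosen to have \emph{maximal} distance among $\{a_i,a_j\},\{b_i,b_j\},\{a_i,b_j\},\{b_i,a_j\}$) and proves that these $\binom{t}{2}$ selected pairs generate pairwise distinct lines. That maximality is used essentially in several of its cases (e.g.\ to force $[a_ka_ib_k]$ from $a_ia_k\ge a_ib_k$, and to rule out a distance-$1$ original pair when $d(L)>2$). Your pigeonhole plan requires something strictly stronger --- a multiplicity bound on \emph{all} generating pairs from $S$, not just on the chosen representatives --- and that stronger claim is exactly where your argument has a gap.

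Concretely, two things are missing. First, your ``at most $4$'' bound is argued per connected component of $H(L')$, but the pigeonhole needs it summed over all components: nothing in your sketch prevents $H(L')$ from having several components, each realizing a different index pair $\{i,j\}$ and each contributing up to $4$ edges inside $S$, in which case $L'$ is generated by $8$ or more cross pairs and the count drops below $\binom{t}{2}$. (This is precisely the configuration of the paper's Case 2, with four distinct indices, and the paper disposes of it only for the maximal-distance representatives, via Lemma~\ref{lem.xyz} and a delicate distance computation.) Second, the assertion that every component of $H(L')$ meeting $S$ ``fits the template of a single index pair'' is not established and excludes, for instance, star components of $H(L')$ centered at some $a_i$ with leaves drawn from three or more indices; Lemma~\ref{lem.bip_blocks} permits stars, and a star with $m$ leaves in $S$ would contribute $m$ generating pairs. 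The entire case analysis needed to rule these out --- the analogue of the paper's Cases 1.1, 1.2.1, 1.2.2, 2.1, 2.2 --- is deferred to ``a case analysis \dots will show,'' and since the paper's version of that analysis leans on the maximal-distance selection you have discarded, it is not clear your stronger multiplicity claim is even true, let alone proved. As written, the proposal is a plan with its central step unexecuted, so it does not constitute a proof.
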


\begin{proof} By Lemma \ref{lem.parallel}, all the distances $a_ib_i$ equal to some constant $d(L) > 1$.
For each pair $i \neq j$,
pick any pair in $\{ \{a_i, a_j\}, \{b_i, b_j\}, \{a_i, b_j\}, \{b_i, a_j\} \}$ with the longest distance in $G$ among the four, and call it
the {\em original pair}, and let $L_{ij}$ be the line generated by the original pair. Note that when $d(L) > 2$,
the original pair must have distance bigger than 1.
We are going to prove that $L_{ij} \neq L_{kh}$ for any two different pairs $\{i, j\}$ and $\{k, h\}$.
Aiming a contradiction, we assume $L_{ij} = L_{hk}$. We have the following cases.

{\em Case 1:} $\{i, j\}$ and $\{k, h\}$ are not disjoint. We may assume $h = i$, that is $L_{ij} = L_{ik}$.

{\em Case 1.1:} The original pairs for $L_{ij}$ and $L_{ik}$ share one vertex. We may assume $L_{ij}=L_{ik}=\ov{a_ia_j} = \ov{a_ia_k}$
and $a_ia_j \geq a_ia_k$. Now
\begin{equation}\label{eq.parallel_non_edge_1_0}
\text{Lemma \ref{lem.abc} and } \ov{a_ia_j} = \ov{a_ia_k} \Rightarrow [a_ja_ia_k];
\end{equation}
\begin{equation}\label{eq.parallel_non_edge_1_1}
\text{Lemma \ref{lem.parallel} and } \ov{a_jb_j} = \ov{a_kb_k} \Rightarrow [a_ka_jb_k].
\end{equation}
\begin{equation}\label{eq.parallel_non_edge_1_2}
\text{(\ref{eq.parallel_non_edge_1_0}) and (\ref{eq.parallel_non_edge_1_1}) } \Rightarrow
[a_ka_ia_jb_k] \Rightarrow a_kb_k > a_ia_k + a_ia_j \geq 2 a_ia_k.
\end{equation}
On the other hand, by the definition of the original pairs, $a_ia_k \geq a_ib_k$, so
\begin{equation}\label{eq.parallel_non_edge_1_3}
\text{Lemma \ref{lem.parallel} and } \ov{a_ib_i} = \ov{a_kb_k} \Rightarrow [a_ka_ib_k] \Rightarrow a_kb_k = a_ia_k + a_ib_k \leq 2a_ia_k.
\end{equation}
(\ref{eq.parallel_non_edge_1_2}) and (\ref{eq.parallel_non_edge_1_3}) contradict.

{\em Case 1.2:} The original pairs for $L_{ij}$ and $L_{ik}$ are disjoint. We may assume $L_{ij} = L_{kh} = \ov{a_ia_j} = \ov{b_ib_k}$. By Lemma \ref{lem.parallel}, $a_ia_j = b_ib_k$.

{\em Case 1.2.1:} $a_ia_j = b_ib_k > 1$. By Lemma \ref{lem.parallel}, $[a_ib_ia_j]$. But $\ov{a_ib_i} = \ov{a_jb_j}$ and Lemma \ref{lem.parallel} imply that $[a_ia_jb_i]$, a contradiction.

{\em Case 1.2.2:} $a_ia_j = b_ib_k = 1$. By our choice of the original pairs, we have $d(L) = 2$ and
\begin{equation}\label{eq.parallel_non_edge_1_4}
a_ia_u = a_ib_u = b_ia_u = b_ib_u = 1, \text{ for $u = j, k$. }
\end{equation}
$\ov{a_ia_j} = \ov{b_ib_k}$ implies $a_j \in \ov{b_ib_k}$,
together with (\ref{eq.parallel_non_edge_1_4}) we get $a_jb_k = 2$.
$\ov{a_ib_i} = \ov{a_jb_j}$ implies $b_j \in \ov{a_ib_j} = \ov{b_ib_k}$,
together with (\ref{eq.parallel_non_edge_1_4}) we get $b_jb_k = 2$.
And $a_jb_j = d(L) = 2$.
The pairwise distances among $a_j$, $b_j$, $b_k$ are all 2, so $b_k \not\in \ov{a_jb_j}$,
contradicts the assumption that $\ov{a_jb_j} = \ov{a_kb_k}$.

{\em Case 2:} $i$, $j$, $k$, $h$ are four distinct indices. We may assume the original pairs are $\{a_i,a_j\}$ and $\{a_k,a_h\}$. By Lemma \ref{lem.parallel}, $a_ia_j = a_ka_h$.

{\em Case 2.1:} $a_ia_j = a_ka_h = 1$. This implies $a_k \in \ov{a_ia_j}$ and we may assume $[a_ia_ja_k]$.
$\ov{a_ib_i} = \ov{a_kb_k}$ implies that $[a_ia_kb_i]$, so we have $[a_ia_ja_kb_i]$.
This implies that $d(L) = a_ib_i > 2$ and, with $a_ia_j = 1$,
$\{a_i,a_j\}$ could not be the original pair for $L_{ij}$.

{\em Case 2.2:} $a_ia_j = a_ka_h > 1$. $\ov{a_ia_j} = \ov{a_ka_h}$, $\ov{a_ib_i} = \ov{a_jb_j}$ and Lemma \ref{lem.parallel}
imply that $[a_ia_ka_j]$ and $[a_ia_jb_i]$.
So $[a_ia_ka_jb_i]$ and we denote $a_ia_k = x$, $a_ka_j = y$, $a_jb_i = z$, and $a_ib_i = x + y + z$.
$\ov{a_ib_i} = \ov{a_jb_j} = \ov{a_kb_k}$ and Lemma \ref{lem.parallel} now imply
\begin{equation}\label{eq.parallel_non_edge_2_1}
\begin{gathered}
a_ia_k = b_ib_k = x, a_ka_j = b_kb_j = y, a_jb_i = a_ib_j = z, \\
a_ib_i = a_jb_j = a_kb_k = x + y + z.
\end{gathered}
\end{equation}
We note that
\begin{equation*}
\text{ (\ref{eq.parallel_non_edge_2_1}) } \Rightarrow
\begin{cases}
[a_ib_jb_kb_i] \Rightarrow a_ib_k = y + z \\
[a_jb_ib_kb_j] \Rightarrow a_jb_k = x + z \\
[a_ia_ka_jb_i] \Rightarrow a_ia_j = x + y.
\end{cases}
\end{equation*}
Now Lemma \ref{lem.xyz} implies that $b_k \not\in \ov{a_ia_j}$. But since $a_h \in \ov{a_kb_k}$, we have $b_k \in \ov{a_ka_h} = \ov{a_ia_j}$, a contradiction.

\end{proof}

\begin{lem}\label{lem.many_missing_edges} If the complement of a non-trivial geometric dominant graph $G$ has $m$ edges, then the number of lines in G is $\Omega(m^{2/3})$.
\end{lem}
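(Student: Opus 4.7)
The plan is to locate a single line $L$ that is generated by many of the $m$ non-edges of $G$, and then to extract $\Omega(k^2)$ distinct lines from that one line, where $k$ is the number of non-edges sharing $L$. Combined with a pigeonhole argument, this yields $\ell = \Omega(m^{2/3})$, where $\ell$ denotes the total number of lines of $G$.

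Every non-edge $\{a,b\}$ generates the line $\ov{ab}$, so by the pigeonhole principle there is a line $L$ that is generated by at least $k := \lceil m/\ell \rceil$ distinct non-edges of $G$. I would then examine the generator graph $H(L)$, whose connected components are complete bipartite by Lemma \ref{lem.bip_blocks}, and split into two cases according to whether $H(L)$ is a star.

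If $H(L)$ is not a star, then because at least one edge of $H(L)$ corresponds to a non-edge of $G$ we have $d(L) > 1$, and Lemma \ref{lem.bip_blocks}(a) forces $H(L)$ to be a matching. Hence the $k$ non-edges generating $L$ involve $2k$ distinct vertices, and Lemma \ref{lem.parallel_non_edge} applied with $t = k$ yields $\binom{k}{2}$ distinct lines of $G$. If instead $H(L)$ is a star with center $c$, the $k$ non-edges take the form $\{c,v_1\}, \ldots, \{c,v_k\}$ with $\ov{cv_i} = L$ for every $i$; Lemma \ref{lem.star_generator} applied with $a = c$ and $B = \{v_1, \ldots, v_k\}$ gives $\ov{v_iv_j} \cap B = \{v_i,v_j\}$, so the $\binom{k}{2}$ lines $\ov{v_iv_j}$ are pairwise distinct.

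In either case $\ell \geq \binom{k}{2}$, and substituting $k \geq m/\ell$ gives $\ell^3 = \Omega(m^2)$, i.e., $\ell = \Omega(m^{2/3})$; the degenerate regime $k \leq 1$ is absorbed into the trivial bound $\ell \geq m \gg m^{2/3}$. The main conceptual hurdle is precisely this case distinction: a line $L$ carrying many non-edges must be rigid enough that its generator graph is either a matching or a star. Fortunately this rigidity is exactly what Lemmas \ref{lem.bip_blocks} and \ref{lem.star_generator} are calibrated to exploit, so the remaining work is just pigeonhole plus correctly invoking the two earlier lemmas.
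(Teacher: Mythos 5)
Your proposal is correct and follows essentially the same route as the paper's proof: partition the $m$ non-edges by the line they generate, pigeonhole to find a line $L$ carried by many non-edges, then split on whether $H(L)$ is a star (Lemma \ref{lem.star_generator}) or, via Lemma \ref{lem.bip_blocks}(a) and the fact that a non-edge forces $d(L)>1$, a matching (Lemma \ref{lem.parallel_non_edge}). The only cosmetic difference is that the paper fixes the pigeonhole threshold at $m^{1/3}$ in advance rather than solving $\ell \geq \binom{m/\ell}{2}$ for $\ell$; the two bookkeeping choices are equivalent.
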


\begin{proof} We partition the $m$ missing edges according to the lines their end points generate. If every part is smaller than $m^{1/3}$ then we are done. Otherwise, there is a line $L$ such that $H(L)$ has more than $m^{1/3}$ edges whose end points are not adjacent in $G$. If $H(L)$ is a star, Lemma \ref{lem.star_generator} implies $\Omega(m^{2/3})$ lines. Otherwise, Lemma \ref{lem.bip_blocks} implies that $H(L)$ is a matching, and Lemma \ref{lem.parallel_non_edge} implies that there are $\Omega(m^{2/3})$ lines.
\end{proof}

\begin{lem}\label{lem.twins} For three distinct vertices $a$, $b$, $c$ in a non-trivial geometric dominant graph, if $\ov{ab} = \ov{ac}$, and $a \sim b$, $a \sim c$, then $b$ and $c$ are twins.
\end{lem}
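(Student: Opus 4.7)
The plan follows a parity-and-containment strategy. First, apply Lemma \ref{lem.abc} to $\ov{ab} = \ov{ac}$ (with $b \neq c$) to conclude $[bac]$, whence $bc = ab + ac = 2$ and in particular $b \not\sim c$. By symmetry it suffices to show $N(b) \subseteq N(c)$. Fix $y \in N(b) \setminus \{a\}$; the case $y = a$ is immediate from $a \sim c$, and note $y \neq c$ since $by = 1 \neq 2 = bc$.

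Next, apply Lemma \ref{lem.parity} to both edges $ab$ and $ac$: for any vertex $z$, $z \in \ov{ab} \Leftrightarrow az \not\equiv bz \pmod{2}$, and similarly for $\ov{ac}$. The equality $\ov{ab} = \ov{ac}$ thus forces $bz \equiv cz \pmod{2}$ for every $z$. In particular $cy$ is odd, and the triangle inequality $cy \leq bc + by = 3$ gives $cy \in \{1, 3\}$; the case $cy = 1$ is what we want. Assume for contradiction that $cy = 3$; then $ay = 2$ is pinned by triangle inequalities on either side.

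The contradiction will be that $\ov{cy} \subsetneq \ov{by}$, violating geometric dominance. The two lines are distinct by Lemma \ref{lem.abc}: otherwise $[cyb]$ would give $bc = cy + yb = 4$. For the inclusion, take any $q \in \ov{cy}$: the endpoint $y$ lies in $\ov{by}$ trivially, and $c$ lies in $\ov{by}$ by Lemma \ref{lem.parity} since $bc$ and $cy$ differ in parity. For $q \notin \{c, y\}$ the collinearity of $c, q, y$ yields either $cq + qy = 3$ or $|cq - qy| = 3$, giving four subcases in total. In each one, combining the triangle inequalities around $\{b, c, q\}$ and $\{b, y, q\}$ with the parity $bq \equiv cq \pmod{2}$ from the previous paragraph pins $|bq - yq|$ to exactly $1$, placing $q \in \ov{by}$ by Lemma \ref{lem.parity}. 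The only obstacle is this case analysis, which is short: for example, when $cq = 1$ and $qy = 2$, parity forces $bq$ odd while the triangle bounds $1 \leq bq \leq 3$ leave $bq \in \{1, 3\}$, both of which give $|bq - qy| = 1$; the remaining three subcases ($cq = 2, qy = 1$; $qy - cq = 3$; $cq - qy = 3$) resolve by the same recipe, with the triangles sandwiching $bq$ into a value exactly one away from $yq$.
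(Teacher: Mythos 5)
Your proof is correct, and it takes a genuinely different route from the paper's. The paper argues structurally: for a neighbor $d$ of $b$, it splits on whether $d \sim a$, and in the harder case invokes Lemma \ref{lem.bridge} to manufacture a common neighbor $b'$ of $a$, $b$, $d$, from which short distance computations force $cd = 1$. You instead extract from $\ov{ab} = \ov{ac}$ (both generated by edges) the global congruence $bz \equiv cz \pmod{2}$ for every vertex $z$ via Lemma \ref{lem.parity} --- the same trick the paper uses in Lemma \ref{lem.unit_square} --- and then, for a hypothetical $y \in N(b)$ with $cy = 3$, exhibit the proper inclusion $\ov{cy} \subsetneq \ov{by}$, contradicting geometric dominance directly; Lemma \ref{lem.bridge} is never needed. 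The paper's proof is more local and shorter on the page; yours isolates a reusable parity invariant and makes the violation of geometric dominance explicit. One remark: your final four-way case analysis can be collapsed. For any $q$ collinear with $c$ and $y$, each of the three betweenness relations $[cqy]$, $[qcy]$, $[cyq]$ forces $cq + qy \equiv cy \equiv 1 \pmod{2}$ or $|cq - qy| = 3$, so in every case $cq \not\equiv qy \pmod{2}$; combined with $bq \equiv cq \pmod{2}$ this gives $bq \not\equiv qy \pmod{2}$ and hence $q \in \ov{by}$ by Lemma \ref{lem.parity}, with no triangle-inequality sandwiching required. (Your aside that $ay = 2$ is also never used.)
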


\begin{proof} First of all $b \not\sim c$, otherwise $ab=bc=ca=1$ and $b \not\in \ov{ac}$. Now suppose there is another vertex $d$ such that $b \sim d$, we are going to show that $c \sim d$.

{\em Case 1:} $a \sim d$, so $d \not\in \ov{ab} = \ov{ac}$; with $ad = ac = 1$, we must have $cd = 1$.

{\em Case 2:} $a \not\sim d$, so $d \in \ov{ab} = \ov{ac}$. By Lemma \ref{lem.bridge}, there is $b'$ such that $b'$ is adjacent to $a$, $b$, and $d$. So $b' \not\in \ov{ab} = \ov{ac}$ therefore $b' \neq c$ and $b'c = 1$ (otherwise $b'a = ac = 1$ implies $b' \in \ov{ac}$).
Now $cd \leq cb' + b'd = 2$, $ad = 2$, and $ac = 1$. Since $d \in \ov{ab} = \ov{ac}$, we must have $cd = 1$.
\end{proof}

Before we go to the final theorem, we have the last lemma that holds for any connected graphs.

\begin{lem}\label{lem.twin_non_edge} In any connected graph $G$, if $a$ and $b$ is a pair of non-adjacent vertices, and if $a$ has a twin $a'$
and $b$ has a twin $b'$ such that $a' \neq b$ and $b' \neq a$ ($a'$ and $b'$ might be the same), then
$\ov{ab}$ is a unique line in the sense that $\ov{cd} = \ov{ab}$ if and only if $\{a, b\} = \{c, d\}$.
\end{lem}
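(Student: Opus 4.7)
The plan is to exploit the twin distance-preserving property to force any generating pair of $\ov{ab}$ to be $\{a,b\}$ itself. First I would record a basic distance fact: since $N(a) = N(a')$, any shortest path from $a$ to $x \neq a, a'$ can be rerouted through $a'$ by replacing its initial edge $a \sim v_1$ with $a' \sim v_1$, so $a'x \leq ax$, and symmetrically $ax = a'x$ for every $x \notin \{a, a'\}$. Likewise $bx = b'x$ for every $x \notin \{b, b'\}$. Also, because the graph is connected on at least two vertices, $a$ has a neighbor which it shares with $a'$, so $aa' = 2$; similarly $bb' = 2$.

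The pivotal intermediate claim is $a' \notin \ov{ab}$ (and symmetrically $b' \notin \ov{ab}$). Since the lemma's hypotheses give $b \notin \{a, a'\}$, the twin identity yields $a'b = ab$, so the three pairwise distances among $a, b, a'$ are $(ab, ab, 2)$. Tightness of any of the three triangle inequalities among these values would force $ab = 1$, contradicting $a \not\sim b$. Hence $a, b, a'$ are not collinear, so $a' \notin \ov{ab}$; the argument for $b'$ is identical, using $b' \neq a$ and $b' \neq b$.

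For the main step, I would suppose $\ov{cd} = \ov{ab}$ and show that $a \in \{c, d\}$ (by symmetry, also $b \in \{c, d\}$, whence $\{c, d\} = \{a, b\}$). Assume for contradiction that $a \notin \{c, d\}$. Then $a' \notin \{c, d\}$ as well, since $a' \in \{c, d\}$ would put $a' \in \ov{cd} = \ov{ab}$, contradicting the pivotal claim. Hence $c, d \notin \{a, a'\}$, so the twin identity gives $a'c = ac$ and $a'd = ad$. Because $a \in \ov{cd}$ with $a \neq c, d$, one of the triangle equalities among $a, c, d$ is tight; the very same equality then holds with $a$ replaced by $a'$, placing $a' \in \ov{cd} = \ov{ab}$, again contradicting the pivotal claim. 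Hence $a \in \{c, d\}$, and by symmetry $\{c, d\} = \{a, b\}$.

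The only mildly subtle point will be the degenerate case $a' = b'$ where $a$ and $b$ share a common twin, but this is handled automatically: each use of the twin property above invokes only one pair at a time, and the individual hypotheses $a' \neq a, b$ and $b' \neq a, b$ are all that the arguments consume.
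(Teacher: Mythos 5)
Your proposal is correct and follows essentially the same route as the paper's proof: establish $a'x = ax$ for $x \notin \{a,a'\}$ and $aa'=2$, deduce $a' \notin \ov{ab}$ from the distance triple $(ab, ab, 2)$ with $ab>1$, and then use the ``both $a$ and $a'$ or neither'' argument to force $a \in \{c,d\}$. The only cosmetic difference is that you spell out the exclusion of all three collinearity cases where the paper leaves that verification implicit.
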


\begin{proof} Suppose $\ov{ab} = \ov{cd}$. First we note that, since $G$ is connected, $aa' = 2$. And
\begin{equation}\label{eq.twin_dists}
\forall z \not\in \{a, a'\}, za = za'.
\end{equation}
In particular
(since $a \not\sim b$) $ab = a'b > 1$, together with $aa'=2$, we have $a' \not\in \ov{ab}$.
So $a' \not\in \{c, d\}$.
If we also have $a \not\in \{c, d\}$, then (\ref{eq.twin_dists}) implies that $ac = a'c$ and $ad = a'd$,
and so $\ov{cd}$ contains both $a$ and $a'$ or none. Contradiction with the fact that $a' \not\in \ov{ab} = \ov{cd}$.

So, $a \in \{c, d\}$. By the similar argument, $b \in \{c, d\}$. So $\{a, b\} = \{c, d\}$.
\end{proof}

\begin{thm}\label{thm.main} $g(n) \in \Omega(n^{4/3})$.
\end{thm}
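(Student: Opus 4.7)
The plan is a case analysis on the number $m$ of non-edges of $G$ and the size of the largest twin class of $G$; write $\ell$ for the number of lines and aim for $\ell = \Omega(n^{4/3})$. First, if $m \geq n^2/16$, Lemma~\ref{lem.many_missing_edges} immediately gives $\ell = \Omega(m^{2/3}) = \Omega(n^{4/3})$. So I may assume $m < n^2/16$, and thus $|E(G)| = \Omega(n^2)$.

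Next, if some twin class $C$ has $|C| \geq n^{2/3}$ then, because $|C| \geq 3$, every pair of distinct $a, b \in C$ is non-adjacent (twins are non-adjacent) and each of $a, b$ has a further twin in $C \setminus \{a, b\}$. Lemma~\ref{lem.twin_non_edge} then makes every such $\ov{ab}$ a distinct line, yielding $\binom{|C|}{2} = \Omega(n^{4/3})$ lines. In the remaining case every twin class has size below $n^{2/3}$. Here I will use the identity $\sum_L f(L) = \binom{n}{2}$ with $f(L) = |E(H(L))|$ and aim to show $f(L) = O(n^{2/3})$ for each $L$, which immediately yields $\ell = \Omega(n^{4/3})$. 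To bound $f(L)$, decompose $H(L)$ into its complete bipartite components via Lemma~\ref{lem.bip_blocks}: a star component $K_{1, t}$ with at least $n^{2/3}$ of its leaves $G$-adjacent to the center would, by Lemma~\ref{lem.twins}, force a twin class of size $\geq n^{2/3}$, contradicting the current subcase; and if $L$ has at least $n^{2/3}$ non-adjacent generating pairs, Lemma~\ref{lem.parallel_non_edge} already produces $\Omega(n^{4/3})$ lines directly.

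The remaining and hardest configuration is when $H(L)$ contains a non-star complete bipartite component $K_{s, t}$ with $s, t \geq 2$, which by Lemma~\ref{lem.bip_blocks}(a) forces $d(L) = 1$ so that all such edges are genuine edges of $G$. A parity argument via Lemma~\ref{lem.parity}, combined with the dense near-diameter-$2$ structure supplied by the edge count, should show that each block of this component is contained in a single twin class, so that $s, t < n^{2/3}$. The naive conclusion $st < n^{4/3}$ is however an order of magnitude weaker than the $O(n^{2/3})$ I need. I plan to close this gap by the auxiliary count of within-block pairs: by Lemma~\ref{lem.twin_non_edge} each of the $\binom{s}{2} + \binom{t}{2}$ within-block pairs in such a component yields a distinct line, and charging these extra lines back against any $L$ with large $f(L)$ should aggregate to $\Omega(n^{4/3})$. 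Matching components with $d(L) = 1$ (where Lemma~\ref{lem.parallel_non_edge} does not directly apply) will need a separate treatment built on the rigid distance relations of Lemma~\ref{lem.parallel} and the $C_4$ structure they force between adjacent matching edges. Making this final bookkeeping quantitatively tight is the crux of the proof.
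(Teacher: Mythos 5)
Your opening reductions are sound and essentially the ones the paper uses: $\Omega(n^2)$ missing edges is Lemma~\ref{lem.many_missing_edges}, a twin class of size $n^{2/3}$ is Lemma~\ref{lem.twin_non_edge}, a large star in $H(L)$ is Lemma~\ref{lem.star_generator}, and a large collection of non-adjacent generating pairs is Lemma~\ref{lem.parallel_non_edge}. (Two small slips there: Lemma~\ref{lem.parallel_non_edge} requires the $2t$ vertices to be \emph{distinct}, so ``$n^{2/3}$ non-adjacent generating pairs'' is not enough on its own --- you must first use Lemma~\ref{lem.bip_blocks}(a) to see that a non-star $H(L)$ with $d(L)>1$ is a matching; and for a star whose leaves are non-adjacent to the centre the right tool is Lemma~\ref{lem.star_generator}, not Lemma~\ref{lem.twins}.) The genuine gap is in the $d(L)=1$ regime, which you yourself label ``the crux,'' and the charging scheme you propose for it cannot close the gap. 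By Lemma~\ref{lem.twin_non_edge} a within-block pair generates a line that no other pair generates, so the total number of distinct lines you can ever harvest from within-block pairs, aggregated over \emph{all} lines $L$ and all components, is at most the total number of twin pairs in $G$, i.e.\ $\sum_C\binom{|C|}{2}$ over twin classes $C$. When every twin class has size $2$ this is only $n/2$, while in that same regime a single line with $d(L)=1$ can a priori have $H(L)$ equal to a disjoint union of $K_{2,2}$'s with $f(L)=\Theta(n)$, so $\sum_L f(L)=\binom{n}{2}$ certifies only $\Theta(n)$ lines. The charge does not aggregate: the same twin pair is reused by many lines $L$ but contributes one new line, and moreover Lemma~\ref{lem.twin_non_edge} fails outright for a block that is an entire twin class of size $2$. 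The $d(L)=1$ matching case is likewise left open; with no control on the cross-distances $a_ia_j$, Lemma~\ref{lem.parallel} alone does not force the $C_4$ structure you are counting on.

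The paper's proof supplies exactly the two devices you are missing. First, it splits on whether at least half the vertices are twin-class representatives. If not, Tur\'an's theorem applied inside the set $X_2$ of non-representatives yields either $\Omega(n^{4/3})$ non-edges --- each generating a unique line by Lemma~\ref{lem.twin_non_edge}, since both endpoints have twins among the representatives --- or a clique $S$ with $\binom{|S|}{2}=\Omega(n^{4/3})$, whose pairs generate pairwise distinct lines because each such line meets $S$ in exactly its generating pair. This disposes of the ``many small twin classes'' regime where your charging collapses. Second, on the twin-free side it does not try to bound $f(L)$ over all pairs; it counts only pairs inside the set $X_0$ of representatives of degree at least $(n-1)/2$ (if $X_0$ is small the complement is dense and Lemma~\ref{lem.many_missing_edges} finishes). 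Since $X_0$ is twin-free, Lemma~\ref{lem.twins} forces $H(L)|_{X_0}$ to be a matching when $d(L)=1$; and since any two vertices of $X_0$ share a neighbour, all cross-distances between two matching edges $a_ib_i$, $a_jb_j$ lie in $\{1,2\}$, which together with Lemma~\ref{lem.parity} pins them down and lets one exhibit, for each pair $i<j$, an explicit line $L_{ij}$ containing $\{a_k,b_k\}$ precisely for $k\in\{i,j\}$. That degree restriction is what makes the hard case rigid; without it your plan has no mechanism to produce $\Omega(n^{4/3})$ lines.
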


\begin{proof} Suppose $G = (V, E)$ and $|V| = n$. The pairs of twins is an equivalence relation over $V$ that gives a partition of the vertices. Let $X_1$ be a set where we pick one vertex from each twin class; and let $X_2 = V - X_1$ so any vertex in $X_2$ has a twin in $X_1$.

{\em Case 1:} $|X_1| \geq n / 2$. Let
\[X_0 = \{ a \in X_1 : \deg(a) \geq \frac{n-1}{2}. \}\]
If $|X_0| \leq n / 4$, then the number of edges in the complement of $G$ is at least $n(n-1)/8$, and Lemma \ref{lem.many_missing_edges}
guarantees $\Omega(n^{4/3})$ lines. Otherwise $|X_0| >  n / 4$, partition the $\binom{|X_0|}{2}$ pairs of vertices in $X_0$ according to the lines they generate. If each part has size less than $n^{2/3}$ then we are done.
Otherwise, there is a line $L$ generated by more than
$n^{2/3}$ pairs in $X_0$. If the generator graph $H(L)$ is a star, Lemma \ref{lem.star_generator} guarantees $\Omega(n^{4/3})$ lines. Otherwise, by Lemma \ref{lem.bip_blocks}, the generating pairs have common distance $d(L)$ in $G$. If $d(L) > 1$, it is a matching and Lemma \ref{lem.parallel_non_edge} gives us $\Omega(n^{4/3})$ lines. If $d(L) = 1$, note that there are no twins in $X_1$ and by Lemma \ref{lem.twins}, $H(L)|_{X_0}$ must be a matching.
So we have $2t$ ($t > n^{2/3}$) distinct vertices $a_i$, $b_i$, $1 \leq i \leq t$ such that $\ov{a_ib_i} = L$ and $a_i \sim_G b_i$.
We are going to find a distinct line $L_{ij}$ for each $i < j$ such that
\[ \{a_k, b_k\} \subseteq L_{ij} \text{ iff } k \in \{i, j\} . \]
Fix any pair $i < j$, by Lemma \ref{lem.parallel} $a_ib_i = a_jb_j = 1$, $a_ia_j = b_ib_j = x$, and $a_ib_j = a_jb_i = y$.
It is clear from the definition of $X_0$ that any pair in $X_0$ has distance at most 2. In order to have $\ov{a_ib_i} = \ov{a_jb_j}$,
$a_j \in \ov{a_ib_i}$ so,
one of $x$ and $y$ is 1, and the other is 2. We define $L_{ij} = \ov{a_ia_j}$ if $a_ia_j = 2$, otherwise $L_{ij} = \ov{a_ib_j}$.
Clearly $\{a_k, b_k\} \subseteq L_{ij}$ when $k \in \{i, j\}$.
For $k \not\in  \{i, j\}$, same reason shows that $a_ia_k = 2$ or $a_ib_k = 2$. In the former case $a_k \not\in L_{ij}$, in the latter
$b_k \not\in L_{ij}$.

{\em Case 2:} $|X_1| < n / 2$. Let $S$ be the largest clique in $X_2$. And write
\[
  E_1 = \{ \{ a, b \} : a, b \in S, a \neq b \},
  E_2 = \{ \{ a, b \} : a, b \in X_2, a\neq b, a \not\sim b \}.
\]
Tur\'an's theorem (\cite{Turan}) guarantees that $|S| \geq |X_2|^2 / (2|E_2| + |X_2|)$, and note that $|E_1| = \binom{|S|}{2}$, so at least one of $|E_1|$ and $|E_2|$ is of order $\Omega(|X_2|^{4/3}) = \Omega(n^{4/3})$. We conclude the proof by pointing out that any pair in $E_1 \cup E_2$ generates a distinct line. Indeed, every pair in $E_2$ generates a distinct line in $G$ by Lemma \ref{lem.twin_non_edge}, and every pair in $E_1$ generates a line that distinctly intersects $S$ only on that pair.

\end{proof}

Part of the proof of Theorem \ref{thm.main} resembles the proof of the lower bound on the number of lines in a metric space with distances $0, 1, 2$ in \cite{Chini_Chvatal}. In fact we do not know if all the non-trivial geometric dominant graphs are of diameter 2.

\section{Discussions}

In the beginning of this work, we proved some properties of the geometric dominant graphs
and found that small non-trivial geometric dominant graphs are rare. This led us to the illusion
that the truth might be similar to those in the classic theorems of Erd\H{o}s-R\'enyi-S\'os (\cite{ERS}) and of
Hoffman-Singleton (\cite{HS}). In particular, one may guess that any such graph must have a center,
and the number of such graphs is small or even zero for big enough $n$.
Most of the illusions were refuted by the delightful surprise of Theorem \ref{thm.super}.
Yet, we do not know

\begin{ques} True or false? Every non-trivial geometric dominant graph has diameter 2?
\end{ques}

The super geometric dominant graphs is an interesting subject by its own right.
Fact \ref{fact.explode} tells that a super geometric dominant graph with more edges
explodes to a geometric dominant graph with less lines. We have

\begin{ques} What is the maximum / minimum number of edges a super geometric dominant graph can have?
\end{ques}

The construction in Theorem \ref{thm.super_super} shows the existence of super geometric dominant graphs
that missed only $O(n \ln n)$ edges. Is that the best possible?
In fact, besides the random graphs, we do not know any constructive description of a (family of)
super geometric dominant graphs. In our calculation, the random graph must have hundreds of vertices
to become super geometric dominant. It is also interesting to study whether such graphs of small sizes exist.

In this work we only focused on the geometric dominant graphs. We would
like to study the geometric dominant metric spaces in the future.
In particular, here is the special case of Chen-Chv\'atal conjecture:

\begin{ques} True or false? Every geometric dominant metric space $(V, \rho)$ where $V$
  is not a line has at least $\Omega(|V|)$ lines?
\end{ques}

\section*{Acknowledgement}

We would like to thank Va\v sek Chv\'atal for the discussions and comments on this work,
and for his inspring role in the study of lines in hypergraphs and metric spaces.
X. Chen would like to thank Pierre Aboulker, Rohan Kapadia, V\'aclav Kratochvíl, Ben Seamone, and Cathryn Supko
for the nice discussions on this and related problems during his short visit to Concordia University.

\appendix

\section{Almost all graphs are super geometric dominant}

Here we give the proof of Theorem \ref{thm.super} that the random graph $\mathcal{G}_{n, p}$ is super geometric dominant almost surely
for a big range of $p$.
As we suggested, the proof is quite similar to that of
Theorem \ref{thm.super_super}. It is only slightly more complicated because we are dealing with
a big range of $p$ instead of just $p = 1/2$.

\begin{thm*} When $n \rightarrow \infty$ and $p$ is a function such that
  \[p \in \omega\left(\sqrt[3]{\frac{\ln n}{n}} \right) \;\; \text{ and } \;\; 1 - p \in \omega\left(\sqrt{\frac{\ln n}{n}} \right), \]
the random graph $\mathcal{G}_{n, p}$ is super geometric dominant (therefore geometric dominant) almost surely.
\end{thm*}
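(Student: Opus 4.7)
The plan is to mirror the proof of Theorem~\ref{thm.super_super}: define a family of $O(n^{4})$ ``bad'' events whose joint absence forces all four conditions of Definition~\ref{defi.general_graph}, bound each failure probability using independence of edge indicators, and apply a union bound. The key simplification throughout is that once the diameter is~$2$, for any $z\notin\{a,b\}$ we have $z\in\ov{ab}$ iff $\{a,b,z\}$ induces exactly two edges (is \emph{tight}); every line--line and line--neighbourhood comparison then reduces to a statement about a constant number of independent Bernoulli$(p)$ variables.

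I would define the following families of events. For every pair $\{a,b\}$, $D_{ab}$ is the event that some $z\neq a,b$ satisfies $a\sim z\sim b$, with $\prob(\ov{D_{ab}})\le(1-p^{2})^{n-2}$, enforcing diameter~$2$. For every ordered pair $(a,b)$, an event requests $z$ with $z\sim a$, $z\not\sim b$ (failing with probability $(1-p(1-p))^{n-2}$), enforcing $N^{*}(a)\not\subseteq N^{*}(b)$. For every $4$-tuple of distinct vertices with $\{a,b\}\neq\{c,d\}$, an event requests a $z$ making $\{a,b,z\}$ tight and $\{c,d,z\}$ not tight, enforcing $\ov{ab}\not\subseteq\ov{cd}$; for every ordered $3$-tuple $(a,b,c)$ an analogous event enforces $\ov{ab}\not\subseteq\ov{ac}$. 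Two symmetric families handle the $N^{*}$-vs-line comparisons in both directions. Altogether there are $O(n^{4})$ events.

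The bulk of the work is lower-bounding the per-witness success probability for the line--line events. Conditioning on the base edges, the edges from $z$ to the remaining vertices are independent Bernoulli$(p)$'s, and a case analysis on whether each base edge is present gives: in the $4$-vertex event the sub-case ``$ab$ present, $cd$ absent'' yields $2p(1-p)^{2}(1+p)$, which binds when $p$ is near~$1$, while the other three sub-cases are each $\Omega(p^{2})$; hence the per-witness success is $\Omega(\min(p^{2},(1-p)^{2}))$. In the $3$-vertex event the sub-case ``$ab$ absent, $ac$ present'' pins the success probability at exactly $p^{3}$, while the other three sub-cases give $\Omega(p(1-p))$; thus the per-witness bound is $\Omega(\min(p^{3},1-p))$. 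The $N^{*}$-vs-line events are dominated by these same two thresholds.

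Combining, every bad event has complement probability at most $\exp(-\Omega(n\cdot\min(p^{3},(1-p)^{2})))$. The hypotheses $np^{3}=\omega(\ln n)$ and $n(1-p)^{2}=\omega(\ln n)$ push this below any negative polynomial in $n$, so the union bound over the $O(n^{4})$ events yields super geometric dominance (and hence geometric dominance) with probability $1-o(1)$. The main obstacle is not conceptual but bookkeeping: correctly enumerating all the sub-cases and isolating precisely where the two critical thresholds $p^{3}$ and $(1-p)^{2}$ emerge, so that the hypotheses on $p$ and $1-p$ are both seen to be tight; once this is pinned down, the argument is a direct $p$-parameterised translation of the proof of Theorem~\ref{thm.super_super}.
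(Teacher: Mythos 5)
Your proposal follows essentially the same route as the paper's own proof in the appendix: the same $O(n^4)$ families of witness events (diameter, closed-neighbourhood antichain, line-vs-line for triples and quadruples, line-vs-neighbourhood in both directions), the same per-witness case analysis that isolates $p^3$ (from the sub-case $a\not\sim b$, $a\sim c$ of the three-vertex comparison) and $(1-p)^2$ as the binding thresholds, and the same union bound. A couple of your intermediate sub-case estimates are slightly imprecise (e.g.\ the three-vertex sub-case with both base edges absent gives per-witness probability $p^2(1-p)$, which is not $\Omega(p(1-p))$ for small $p$), but every sub-case is still $\Omega(\min(p^3,(1-p)^2))$, so your final bound and conclusion are unaffected.
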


\begin{proof}
Let $q = 1 - p$. It is clear that $p^2 + q^2 \geq 1/2$, and, because one of $p$ and $q$ is at least $1/2$, when $p$ grows with $n$ under the condition specified by our statement, $p^iq^j \geq \min(p^i, q^j) / 8 \in \omega(\ln n / n) $ for any integers $1 \leq i \leq 3$ and $1 \leq j \leq 2$.

We call a three-vertex set {\em tight} if they induce two edges. Note that in the metric space induced by a graph with diameter 2,
three vertices are collinear if and only if they form a tight set. Given any three vertices $a$, $b$, and $z$, and let $T$ be the event that $\{a, b, z\}$ is tight, we have
\begin{equation}
\begin{split}
\prob(T | a \not\sim b) = p^2, \;\;\; \prob(\ov{T} |  a \not\sim b) = 1 - p^2 = (1+p)q \\
\prob(T | a \sim b) = 2pq, \;\;\; \prob(\ov{T} |  a \sim b) = 1 - 2pq = p^2 + q^2
\end{split}
\end{equation}

We define and bound (the probability of the complement of) the following events.

For any two vertices $a$ and $b$, $D_{ab}$ is the event that there is another vertex $z$ such that $a \sim z$ and $b \sim z$. And let $D$ be the intersection of all the $D_{ab}$'s, so $D$ implies that the graph has diameter at most 2.
\[\prob(\ov{D_{ab}}) = (1 - p^2)^{n-2} \leq \exp(-p^2(n-2)).\]

For any two vertices $a$ and $b$, $N_{ab}$ is the event that there is another vertex $z$ such that $a \sim z$ and $b \not\sim z$.
\[\prob(\ov{N_{ab}}) = (1 - pq)^{n-2} \leq \exp(-pq(n-2)).\]

For any three vertices $a$, $b$, and $c$, $L_{abc}$ is the event that either none of $b$ and $c$ is adjacent to $a$, or there is another vertex $z$ such that $\{a, b, z\}$ is tight and $\{a, c, z\}$ is not tight. Note that any outcome in $D \cap L_{abc}$ has the property that lines $\ov{ab} \not\subseteq \ov{ac}$.

\begin{figure}[h]
  \begin{center}
    \includegraphics[width=5.2in]{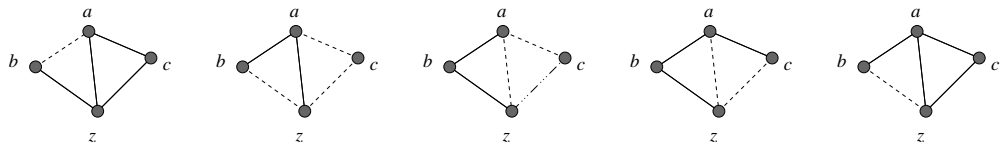}
  \end{center}

  \caption{The cases for $L_{abc}$. Straight lines denote adjacent vertices, dashed lines denote non-adjacent vertices, mixed lines means two vertices can be adjacent or non-adjacent.}\label{fig.super}
\end{figure}

As in Figure \ref{fig.super}, we have
\[\prob(\ov{L_{abc}} | a \not\sim b, a \sim c) = (1 - p^3)^{n-3} \leq \exp(-p^3(n-3)).\]
\[\prob(\ov{L_{abc}} | a \sim b, a \not\sim c) = (1 - pq^2 - pq)^{n-3} \leq \exp(-pq(n-3)).\]
\[\prob(\ov{L_{abc}} | a \sim b, a \sim c) = (1 - pq^2 - p^2q)^{n-3} = (1 - pq)^{n-3} \leq \exp(-pq(n-3)).\]

For any four distinct vertices $a$, $b$, $c$, and $d$, $L_{abcd}$ is the event that there is a vertex $z \not\in \{a, b, c, d\}$ such that $\{a, b, z\}$ is tight and $\{c, d, z\}$ is not tight.
\[\prob(\ov{L_{abcd}} | a \not\sim b, c \not\sim d) = (1 - p^2(1+p)q)^{n-4} \leq \exp(-p^2q(n-4)).\]
\[\prob(\ov{L_{abcd}} | a \not\sim b, c \sim d) = (1 - p^2(p^2+q^2))^{n-4} \leq \exp(-p^2q^2(n-4)).\]
\[\prob(\ov{L_{abcd}} | a \sim b, c \not\sim d) = (1 - 2pq(1+p)q)^{n-4} \leq \exp(-2pq^2(n-4)).\]
\[\prob(\ov{L_{abcd}} | a \sim b, c \sim d) = (1 - 2pq(p^2+q^2))^{n-4} \leq \exp(-pq(n-4)).\]

For any two vertices $a$ and $b$, $E_{ab}$ is the event that there is another vertex $z$ such that $z \sim a$ and $\{a, b, z\}$ is not tight.
\[\prob(\ov{E_{ab}} | a \not\sim b) = (1 - pq)^{n-2} \leq \exp(-pq(n-2)).\]
\[\prob(\ov{E_{ab}} | a \sim b) = (1 - p^2)^{n-2} \leq \exp(-p^2(n-2)).\]

For any two vertices $a$ and $b$, $E'_{ab}$ is the event that either $a \not\sim b$, or there is another vertex $z$ such that $z \not\sim a$ and $z \sim b$. Note that any outcome in $D \cap E'_{ab}$ has the property that $\ov{ab} \not\subseteq N^*(a)$.
\[\prob(\ov{E'_{ab}}) = p(1 - pq)^{n-2} \leq \exp(-pq(n-2)).\]

For any three vertices $a$, $b$, and $c$, $E_{abc}$ is the event that there is another vertex $z$ such that $z \sim a$ and $\{b, c, z\}$ is not tight.
\[\prob(\ov{E_{abc}} | b \not\sim c) = (1 - p(1+p)q)^{n-3} \leq \exp(-pq(n-3)).\]
\[\prob(\ov{E_{abc}} | b \sim c) = (1 - p(p^2 + q^2))^{n-3} \leq \exp(-\frac{p}{2}(n-3)).\]

For any three vertices $a$, $b$, and $c$, $E'_{abc}$ is the event that there is another vertex $z$ such that $z \not\sim a$ and $\{b, c, z\}$ is tight.
\[\prob(\ov{E'_{abc}} | b \not\sim c) = (1 - qp^2)^{n-3} \leq \exp(-p^2q(n-3)).\]
\[\prob(\ov{E'_{abc}} | b \sim c) = (1 - q \cdot 2pq)^{n-3} \leq \exp(-2pq^2(n-3)).\]

Thus we defined $O(n^4)$ events, and the probability of the complement of each is bounded above by $\exp(-Cp^3n)$ or $\exp(-Cq^2n)$ where $C$ is some constant. We conclude our proof by pointing out that any outcome in the intersection of all the events is a super geometric dominant graph.

\end{proof}

\section{Some easy proofs of weaker lower bounds on lines in geometric dominant graphs}

Lemma \ref{lem.star_generator} already provides a lower bound of $\Theta(n^{2/3})$ lines in any non-trivial geometric dominant graph --- consider any vertex $v$
and all the lines $\ov{vw}$, if none of the lines is generated $n^{1/3}$ times, then there are at least $(n-1) / n^{1/3}$ lines; otherwise, by Lemma \ref{lem.star_generator}, there
are $\Theta(n^{2/3})$ lines.

From Lemma \ref{lem.bip_blocks} we had a short proof for the linear lower bound.

\begin{thm} $g(n) \in \Omega(n)$.
\end{thm}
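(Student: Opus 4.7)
Proof plan. The plan is a pigeonhole on pairs combined with structural extraction via Lemma \ref{lem.bip_blocks}. Writing $e(L) = |E(H(L))|$, we have $\sum_L e(L) = \binom{n}{2}$. If every line $L$ satisfies $e(L) \leq n$, then the number of distinct lines is at least $\binom{n}{2}/n \geq (n-1)/2 = \Omega(n)$ and we are done. So assume some line $L$ satisfies $e(L) > n$.

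For such an $L$, note $V(H(L)) \leq |L| \leq n-1$. A pure matching in $H(L)$ would give $e(L) \leq (n-1)/2$, and a pure disjoint union of stars would give $e(L) \leq V(H(L)) - (\text{\# components}) < n$. Hence by Lemma \ref{lem.bip_blocks}, $H(L)$ must contain a ``thick'' complete bipartite component $K_{r,s}$ with $r, s \geq 2$. By Lemma \ref{lem.bip_blocks}(a) this forces $d(L) = 1$. Applying Lemma \ref{lem.abc} to the equality $\ov{a_i b} = \ov{a_j b} = L$ for $a_i, a_j \in A$ and $b \in B$ gives $[a_i b a_j]$, so all within-side distances equal $2$. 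Applying Lemma \ref{lem.star_generator} first to $a_1 \in A$ with leaves $B$, then to $b_1 \in B$ with leaves $A$, produces $\binom{s}{2}$ pairwise distinct lines $\ov{b_i b_j}$ and $\binom{r}{2}$ pairwise distinct lines $\ov{a_i a_j}$. Moreover each $\ov{a_i a_j}$ contains all of $B$ (from $[a_i b a_j]$) yet meets $A$ only in $\{a_i, a_j\}$ (by Lemma \ref{lem.xyz} applied to three within-side vertices), and symmetrically for the $B$-family; when $r, s \geq 3$ the two families are therefore mutually disjoint, so the thick component contributes at least $\binom{r}{2} + \binom{s}{2} \geq rs/2$ distinct lines (using $(r-1)(s-1) \geq 1$).

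If some thick component satisfies $rs \geq 2n$ we immediately get $\Omega(n)$ distinct lines and are done. Otherwise every thick component has $rs < 2n$, and one must sum the $\Omega(rs)$ contribution across components of $H(L)$ (together with any Lemma \ref{lem.star_generator} yield from large pure stars in $H(L)$) to obtain a total of $\Omega(e(L)) = \Omega(n)$. The main anticipated obstacle is cross-component distinctness: the $\binom{r_C}{2} + \binom{s_C}{2}$ lines extracted from two different thick components could, in principle, coincide. However, by Lemma \ref{lem.bip_blocks}(b) the mutual distance between two blocks is a single constant $\gamma$, and when $\gamma \neq 1$ a direct argument using Lemma \ref{lem.xyz} shows that a block vertex of one component does not lie on a line generated by the other; the remaining case $\gamma = 1$ imposes so rigid a configuration (effectively merging the two blocks into a single bipartite structure in $G$) that it can be ruled out by hand.
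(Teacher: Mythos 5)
Your overall architecture is the same as the paper's: partition the $\binom{n}{2}$ pairs by the line they generate, dispose of the case where every $H(L)$ has at most $n$ edges by pigeonhole, and otherwise extract many distinct lines from a heavy $H(L)$ using Lemma \ref{lem.bip_blocks} and Lemma \ref{lem.star_generator}. The per-component count is fine. The gap is exactly where you flag it and then wave it off: cross-component (and cross-block) distinctness in the case where the inter-block distance is $\gamma=1$. "It can be ruled out by hand" is not a proof, and this case is the only genuinely nontrivial step in making "sum the $\Omega(rs)$ contributions" legitimate; Lemma \ref{lem.parallel} does \emph{not} rule it out, since with $a_1a_2=b_1b_2=2$ and all cross-distances $1$ every betweenness relation it demands is satisfied. (The case is in fact refutable: $\ov{a_1a_2}=\ov{b_1b_2}$ forces every $z$ in the opposite side $A'$ of the first component to be adjacent to $b_1$ and $b_2$, and then Lemma \ref{lem.parity} applied to an edge $a_1a'$ with $a'\in A'$ gives $b_1a_1=b_1a'=1$, hence $b_1\notin\ov{a_1a'}=L$, contradicting $b_1\in L$ --- but some such argument must actually be written.) A second, smaller hole: components of $H(L)$ that are single edges contribute to $e(L)$ but yield zero lines under your scheme, so $\Omega(e(L))$ does not follow componentwise; you need to observe separately that such components account for at most $n/2<e(L)/2$ edges.

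For comparison, the paper avoids this entire difficulty by discarding all blocks of size at most $2$ (they contribute only $O(n)$ edges in total, since there are at most $n/2$ nontrivial components) and keeping only blocks $A_i$ with $|A_i|\ge 3$. For those, Lemma \ref{lem.bip_blocks}(b) gives the clean dichotomy that any line generated within one block meets every \emph{other} block in all of it or none of it, while meeting its own block in exactly the generating pair; since the surviving blocks have size at least $3$, a two-element intersection can never be confused with an all-or-nothing one, and distinctness across blocks is immediate. If you adopt that truncation, your argument closes; as written, it does not.
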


\begin{proof}
Let $G = (V, E)$ be a non-trivial geometric dominant graph and $|V| = n$.

{\em Case 1.} For every line $L$, the number of edges in $H(L)$ is at most $n$.
Note that all the generator graphs form an edge partition of the complete graph $K_n$.
So there are $\Omega(n)$ lines.

{\em Case 2.}
There is a line $L$ such that $H(L)$ has more than $n$ edges. By Lemma \ref{lem.bip_blocks}, the components
of $H(L)$ are complete bipartite graphs $(A_i, B_i)$ $i = 1, 2, ..., t$. We may arrange the blocks so that (1) $|A_i| \geq |B_i|$, (2) $|A_i| \geq |A_j|$ whenever $i < j$.
So there is a $1 \leq t^* \leq t+1$ such that $|A_i| > 2$ if and only if $i < t^*$. The number of edges in $H(G)$ is
\[ \sum_{i = 1}^t |A_i||B_i| \leq \sum_{i < t^*} |A_i||B_i| + \sum_{i \geq t^*} |A_i||B_i| \leq \sum_{i < t^*} 4 \binom{|A_i|}{2} + 4t. \]
Because there are at most $n / 2$ non-isolated connected components, $4t \in O(n)$. If $\sum_{i < t^*} \binom{|A_i|}{2} \in O(n)$ as well, we are done. Otherwise,
consider the any line $\ov{ab}$ for $a, b \in A_i$ for some $i < t^*$. By Lemma \ref{lem.star_generator}, $\ov{ab} \cap |A_i| = \{a, b\}$. And by Lemma \ref{lem.bip_blocks}(b),
for any other block $A_j$ ($j < t^*$), either $A_j \subseteq \ov{ab}$, or $X \cap \ov{ab} = \emptyset$. It is clear to see these are $\sum_{i < t^*} \binom{|A_i|}{2}$ distinct lines in $G$.
\end{proof}

\end{document}